\documentclass[a4paper]{lipics-v2021}

\newtheorem*{lemma*}{Lemma}
\nolinenumbers

\hideLIPIcs  



\title{The Shortest Temporal Exploration Problem} 

\author{Stefan {Balev}}{Université Le Havre Normandie, Univ Rouen Normandie, INSA Rouen Normandie, Normandie
Univ, LITIS UR 4108, F-76600 Le Havre, France}{stefan.balev@univ-lehavre.fr}{}{}
\author{Éric {Sanlaville}}{Université Le Havre Normandie, Univ Rouen Normandie, INSA Rouen Normandie, Normandie
Univ, LITIS UR 4108, F-76600 Le Havre, France}{eric.sanlaville@univ-lehavre.fr}{https://orcid.org/0000-0001-9482-3945}{}
\author{Antoine {Toullalan}}{Université Le Havre Normandie, Univ Rouen Normandie, INSA Rouen Normandie, Normandie
Univ, LITIS UR 4108, F-76600 Le Havre, France}{antoine.toullalan@univ-lehavre.fr}{https://orcid.org/0009-0000-1239-9424}{}
\authorrunning{S. Balev, É. Sanlaville and A. Toullalan} 

\Copyright{Stefan Balev, Éric Sanlaville and Antoine Toullalan} 

\ccsdesc[500]{Mathematics of computing~Graph theory} 

\keywords{Graph Exploration, Graph Theory, Temporal Graph} 

\category{} 

\relatedversion{} 


\funding{This work was Supported by the French ANR, project ANR-22-CE48-0001 (TEMPOGRAL)}

\acknowledgements{}


\EventEditors{}
\EventNoEds{2}


\begin{document}

\maketitle

\begin{abstract}
A temporal graph is a graph for which the edge set can change from one time step to the next. This paper considers undirected temporal graphs defined over $L$ time steps and connected at each time step.
We study the Shortest Temporal Exploration Problem (STEXP) that, given all the evolution of the graph, asks for a temporal walk that starts at a given vertex, moves over at most one edge at each time step, visits all the vertices, takes at most $L$ time steps and traverses the smallest number of edges.
.
We prove that every constantly connected temporal graph with $n$ vertices can be explored with $O(n^{1.5})$ edges traversed within $O(n^{3.5})$ time steps.
This result improves the upper bound of $O(n^2)$ edges for an exploration provided by the upper bound of time steps for an exploration which is also $O(n^2)$.
Morever, we study the case where the graph has a diameter bounded by a parameter $k$ at each time step and we prove that there exists an exploration which takes $O(kn^2)$ time steps and traverses $O(kn)$ edges.
Finally, the case where the underlying graph is a cycle is studied and tight bounds are provided on the number of edges traversed in the worst-case if $L\geq 2n-3$.
\end{abstract}

\section{Introduction}
Many real-life networks are not static objects because their connections may vary over time (e.g. transportation, communication, social networks,...). A tool to modelise and study these networks is temporal graphs whose edge set changes over time. More formally, a \emph{temporal graph }$\mathcal{G}$ of lifetime $L$ is usually defined as a sequence of undirected graphs $(G_1,G_2,...,G_L)$ called \emph{snapshots}. We refer to the surveys\cite{survey1,survey2} on the algorithmic aspects of temporal graphs for more details. 
The adaptation of path-related problems from static graphs to temporal graphs raises several questions: mainly, the notion of path, and the optimization criteria to use. This is shown by the study of some classical path-related problems transposed to temporal graphs, for example the shortest path prolem\cite{bui_xuan,spath}, the Steiner tree problem\cite{steiner} or the exploration problem\cite{erlebach_temp_explo,faster_explo}. In a static graph, a set of edges connecting two vertices $u$ and $v$ is called a \emph{path} between $u$ and $v$, but in a temporal graph the equivalent is a \emph{journey} from $u$ to $v$ which is a sequence of edges successive in time from $u$ to $v$. It means that, in temporal graphs, at least three criteria can be optimized: minimizing the \emph{number of edges}, minimizing the \emph{arrival time} and minimizing the \emph{duration}(the difference between the arrival time and the starting time of the agent that does the journey).
An extensive study of these criteria for the journeys in temporal graphs can be found in the article of Bui-Xuan et al.\cite{bui_xuan}.\\
A well studied path-related problem in temporal graphs is the exploration problem introduced by Michail and Spirakis\cite{TSP_in_temp_graph}. They make the assumption that the graph is connected at each time step because it guarantees that an exploration exists if the lifetime $L$ is large enough. 
We note the Temporal Exploration problem which minimizes the \emph{arrival time} the ETEXP (Earliest Temporal Exploration Problem). The problem that aims at minimizing the number of edges is the STEXP (Shortest Temporal Exploration Problem) and the problem that aims at minimizing the duration of the journey is the FTEXP (Fastest Temporal Exploration Problem).
ETEXP is the problem where an agent starts at a given vertex, aims at visiting all the vertices (traversing at most one edge per time step) and has the \emph{smallest arrival time}. Most papers studied ETEXP under the assumption that the graph is constantly connected \cite{erlebach_temp_explo,two_moves,ilcinkas_ring,ilcinkas_cactus,faster_explo,k_edge_deficient} and this paper also considers only constantly connected temporal graph so we simply refer to them as "temporal graphs". Results on ETEXP are summarized on table \ref{tab:tab1}. The static graph whose edge set is the union of the edge sets of each snapshot is denoted the \emph{underlying graph} and the total number of vertices is denoted by $n$.

\begin{table}[]
    \centering
    \begin{tabular}{|c|c|c|c|}
    \hline
     \textbf{Underlying graph} & \textbf{Lower bound} & \textbf{Upper Bound} & \textbf{References} \\
     \hline
     General case & $\Theta(n^2)$ & $\Theta(n^2)$ & \cite{erlebach_temp_explo}\\ \hline
    Cycle & $2n - 3$ & $2n -3$ & \cite{ilcinkas_ring}\\ \hline
    Treewidth k &  &$O(n^{1.5}k\log n)$  & \cite{faster_explo}\\ \hline
    Planar  & $\Omega(n\log n)$ & $O(n^{1.75}\log n)$& \cite{erlebach_temp_explo} and \cite{faster_explo}\\ \hline
    Grid of size $2\times m$ & & $O(m\log^3 m)$ & \cite{erlebach_temp_explo}\\ \hline
    Cactus &  & $O(n\frac{\log n}{\log\log n})$ & \cite{ilcinkas_cactus}\\ \hline
    Cycle with k chords  &  & $6kn$ & \cite{faster_explo}\\ \hline
    \hline
    \textbf{Other restricted cases of temporal graph} & \textbf{Lower bound} & \textbf{Upper Bound} & \textbf{References} \\ \hline
    "k-edge-deficient" graph & $\Omega(n\log k)$ & $O(kn\log n)$ & \cite{k_edge_deficient}\\ \hline
    Each snapshot has a bounded maximum degree & $\Omega(n\log n)$ & $O(n^{1.75})$ & \cite{two_moves} and \cite{erlebach_temp_explo}\\ \hline
    \end{tabular}
    \caption{Main results for ETEXP. A temporal graph is "k-edge-deficient" if each snapshot has at most $k$ edges missing comparing to the underlying graph. A cactus is a graph where two cycles share at most one vertex.}
    \label{tab:tab1}
\end{table}
While much work has been done on the study of the ETEXP (often called TEXP), to the best of our knowledge no specific studies exist on the STEXP even though Michail and Spirakis\cite{TSP_in_temp_graph} studied the $TTSP(1,2)$ problem where at each time step the snapshot is a complete graph with a cost of $1$ or $2$ on each edges. The decision problem of the $TTSP(1,2)$ with cost $n$ may be associated to the decision problem of STEXP variant where the number of traversed edges is exactly $n-1$. This problem is NP-complete since the goal is to compute a temporal hamiltonian cycle of total cost $n$ in the temporal graph.
\newline
\newline
\textbf{\indent Our contributions}
\newline
\newline
This paper focuses on finding upper bounds for STEXP for different classes of temporal graphs.
Results are presented for the STEXP on temporal graphs with the assumption that the agent knows all the evolution of the graph and can traverse at most one edge per time step, the class of journeys used and more definitions are presented in section \ref{sec:preliminaries}. Section \ref{sec:general} contains the proof that there is a value $L_l$ which is $O(n^{3.5})$ such that for any temporal graph $\mathcal{G}$ with a lifetime $L\geq L_l$, there is an exploration traversing $O(n^{1.5})$ edges. This result improves the upper bound of $O(n^2)$ edges traversed for an exploration that is provided by the existence of an exploration that takes $O(n^2)$ time steps\cite{TSP_in_temp_graph}. 
Then, section \ref{subsec:diam} contains the proof that if each snapshot of the graph has a bounded diameter $k$ and if the lifetime is $L\geq kn^2$, an exploration exists that traverses $O(kn)$ edges. Erlebach et. al\cite{erlebach_temp_explo} have proven that there exists a family of temporal graphs with snapshots of bounded diameter (each snapshot is a star) for which all explorations take $\Omega(n^2)$ time steps and traverse $\Omega(n)$ edges, and the later result implies that for every temporal graph of bounded diameter, there is an exploration that takes $O(n^2)$ time steps and traverses $O(n)$ edges.
Finally section \ref{subsec:cycle} studies the case where the underlying graph is a cycle if $L\geq 2n-3$ because Ilcinkas et al.\cite{ilcinkas_ring} proved that an exploration exists if $L\geq 2n-3$. We prove that the worst-case number of edges traversed is $2n-3$ if $L=2n-3$, but if $L>2n-3$ there always exists an exploration traversing at most $\lfloor\frac{3}{2}(n-1) \rfloor$ edges.  
\section{Preliminaries}
\label{sec:preliminaries}
\begin{definition}{Temporal graph}
\\A temporal graph $\mathcal{G}$ with a vertex set $V_{\mathcal{G}}$ and a lifetime $L_{\mathcal{G}}$ is a sequence of static graphs $(G_1,G_2,...G_{L_{\mathcal{G}}})$ where $G_i=(V_{\mathcal{G}},E_i)$ is called the snapshot at the time step $i\in[1,L_{\mathcal{G}}]$.The underlying graph of $\mathcal{G}$ is $G=(V_{\mathcal{G}},E)$ with $V_{\mathcal{G}}$ the vertex set of $\mathcal{G}$ and $E$ the union of the edge sets of all the snapshot of $\mathcal{G}$.  
\end{definition}
In the rest of the paper, if no ambiguity arises, we will note the lifetime by $L$ and the vertex set by $V$. We restrict our attention to always-connected graph. Morever, when we talk about journeys, we often refer to an agent to describe the edges that are used on the temporal graph.
\begin{definition}{Path}
    \\Let $\mathcal{G}$ be a temporal graph. A path between the vertices $u\in V$ and $v\in V$ is a set of edges \emph{all belonging to the same snapshot} and connecting $u$ and $v$.
\end{definition}
We highlight the fact that a path is associated to exactly one snapshot which is different froma the definiton of a journey presented below.
\begin{definition}{Journey}
    \\Let $\mathcal{G}$ be a temporal graph. A journey from $u\in V$ to $v\in V$ is a sequence of edges of increasing time steps (but not necessarily consecutive), $[((v_0,v_1),t_1),((v_1,v_2),t_2),...,(v_{l-1},v_l),t_l)]$, where $(v_i,v_{i+1})\in E_i$, $v_0=u$ and $v_l=v$.So an agent can go from $u$ to $v$ by traversing the edges of the sequence at the time step indicated.\\
    There exist two types of journeys:
    \begin{itemize}
        \item the non-strict journeys where the agent can traverse several edges per time step. 
        \item the strict journeys where the agent can traverse at most one edge per time step, i.e. there is $\forall{i<l}, t_i< t_{i+1}$.
    \end{itemize}
\end{definition}
Since constantly connected temporal graphs are studied, if non-strict journeys were allowed, there would be an exploration in $1$ time step with at most $2n-3$ edge traversals. So only strict journeys are considered which means that at each time step the agent can either stay on a vertex or traverse an edge.  
\begin{definition}{Journey parameters}
    \begin{itemize}
        \item{Journey length}
        \\Let $\mathcal{G}$ be a temporal graph and the journey $J=((e_1,t_1),(e_2,t_2),...,(e_k,t_k))$ in $\mathcal{G}$.
        The length of $J$ is the number of edges of the journey, i.e. $k$.
        \item{Arrival time}
        \\The arrival time is the time step where the agent traverse the last edge which is $t_k$.
        \item{Journey time}
        \\The journey time is the number of time steps of the journey which is $t_k-t_1+1$.
    \end{itemize}
\end{definition}

The notation $u\rightsquigarrow_r v$ is used for a journey from $u$ to $v$ in $\mathcal{G}$ whose length is less than or equal to $r$ (we remark that $r$ is not necessarily an integer). Similarly, for a snapshot $G_i$ of $\mathcal{G}$ we note $u\leftrightarrow_r v$ a path connecting $u$ and $v$ with a number of edges less than or equal to $r$ in $G_i$. 
\begin{lemma}{Reachability\cite{TSP_in_temp_graph}}
    \label{lemma:l1}
    \\Let $\mathcal{G}$ be a temporal graph of lifetime $L\geq n-1$ and $(u,v)$ a pair of vertices. Then $\forall t\leq L-n+2$ there is a strict journey from u to v starting at t, whose journey time is at most $n-1$.
\end{lemma}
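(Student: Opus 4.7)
The plan is to establish reachability by a BFS-style forward propagation that leverages the per-snapshot connectivity of $\mathcal{G}$. First I would introduce, for each $i \in \{0, 1, \ldots, n-1\}$, the set $R_i \subseteq V$ of vertices reachable from $u$ by a strict journey that starts at time $t$ and uses only the snapshots $G_t, G_{t+1}, \ldots, G_{t+i-1}$. By definition $R_0 = \{u\}$, and since at time step $t+i$ the agent may either remain at its current vertex or cross a single edge of $G_{t+i}$, the sets satisfy
\[
R_{i+1} \;=\; R_i \;\cup\; N_{G_{t+i}}(R_i).
\]

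The decisive step is the monotonicity claim: as long as $R_i$ is a proper nonempty subset of $V$, the connectedness of $G_{t+i}$ forces at least one edge of that snapshot to cross the cut $(R_i, V \setminus R_i)$, so $R_{i+1} \supsetneq R_i$. Hence $|R_i|$ strictly increases with $i$ until $R_i = V$, and since $|R_0| = 1$, saturation occurs no later than $i = n-1$. In particular $v \in R_{n-1}$, which unrolls to a strict journey from $u$ to $v$ whose edge time stamps all lie in $\{t, t+1, \ldots, t+n-2\}$. The hypothesis $t \leq L-n+2$ guarantees that every such snapshot exists, and the journey time $t_k - t_1 + 1$ is bounded by $(t+n-2) - t + 1 = n-1$, as required.

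The only mildly delicate point is pinning down the indexing convention between ``the agent is located at a vertex at time $t+i$'' and ``edges used lie in snapshots $G_t, \ldots, G_{t+i-1}$''; once that is fixed, the argument is purely structural, a snapshot-by-snapshot analogue of the classical fact that BFS layers in a connected static graph grow strictly until the vertex set is exhausted within $n-1$ steps. No delicate combinatorial estimates are needed.
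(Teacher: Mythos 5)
Your argument is correct and is essentially the standard proof of this reachability fact (the paper itself cites it from Michail and Spirakis and gives no proof): the reachable set grows by at least one vertex per time step because each snapshot is connected, so it saturates within $n-1$ steps, and tracing back the edge that first brought $v$ into the set yields a strict journey with time stamps in $\{t,\ldots,t+n-2\}$. The indexing point you flag is handled correctly, and the hypothesis $t\leq L-n+2$ is exactly what guarantees the last snapshot $G_{t+n-2}$ exists.
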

\section{The STEXP in the general case}
\label{sec:general}
In this section the following notation and definition will be used.
Let $\mathcal{G}_{[a,b]}$ be the temporal graph corresponding to $\mathcal{G}$ between the time steps $t=a$ and $t=b$.
\begin{definition}{k-journey-free set\\}
Let $\mathcal{G}$ be a temporal graph, $S$ a subset of $V$ and $1< k<n$. We define $X_k^S$ a k-journey-free set as a subset of $S$ such that there is no pair of distinct vertices $u,v\in X_k^S$ such that both of the journeys $u \rightsquigarrow_k v$ or $v \rightsquigarrow_k u$ exist and $X_k^S$ has a \emph{maximal size}.\\
We denote $X_k$ a k-journey-free set $X_k^V$.
\end{definition}
Note that it is possible that $|X_k|=1$, i.e. $X_k=\{v\}$, in which case for any vertex $u\neq v$ the journeys $u \rightsquigarrow_k v$ and $v\rightsquigarrow_k u$ exist.
Morever, we remark that $X_k^S$ can correspond to different (even disjoint) subsets of $S$ as shown in figure \ref{fig:img_ex_Xk}. In this figure, a temporal graph is defined on 3 time steps and two exemples of $X_2$ sets in the graph of lifetime $3$ : $\{A,D\}$ and $\{E,C\}$. For the set $\{A,D\}$ there is no journeys of $2$ edges or less from $A$ to $D$ whose last edge is traversed at most at $3$ so $A$ and $D$ can be in the same set $X_2$ but there is a journey of $2$ edges from $A$ to $C$ and from $C$ to $A$ in the three time steps so $A$ and $C$ can't be in the same set $X_2$. Finally, if $S=\{A,B,D\}$, then $X_k^S=\{A,D\}$.
\begin{figure}[!h]
    \centering
    \includegraphics[scale=0.25]{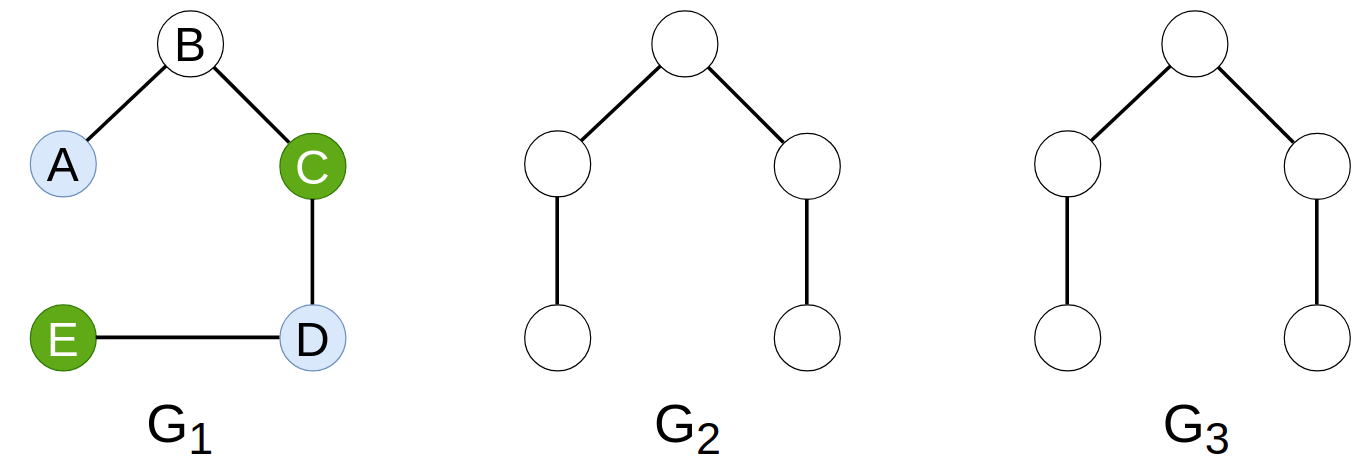}
    \caption{A representation of a temporal graph with $L=3$. Two examples of $X_2$ sets are presented: the subsets of vertices colored blue ($A$ and $D$) and green ($E$ and $C$).}
    \label{fig:img_ex_Xk}
\end{figure}

Now we will prove that there exists an exploration traversing $O(n^{1.5})$ edges that takes $O(n^{3.5})$ time steps. The proof is split into a number of lemmas, whose proofs are left in the appendix. Recall that the assumption is that each snapshot is connected and the agent already knows all the evolution of the graph.\\
Let $(u,v)$ be a pair of vertices and $k,1\leq k<n$. Our first step is proving a sufficient condition for the existence of a journey of at most $k$ edges from $u$ to $v$.

\begin{lemma}
   Let $\mathcal{G}$ be a temporal graph of $n$ vertices and $k$ such that $1\leq k< n$, the lifetime of the graph is $L\geq kn$. \\ Let $u,v$ be two distinct vertices of $V$, such that there exist $kn$ distinct paths (i.e on distinct snapshots) with a number of edges less than or equal to $k$ connecting $u$ and $v$, each of these paths $u\leftrightarrow_k v$ is associated with exactly one time step. Then there exists a journey $u \rightsquigarrow_k v$ in $\mathcal{G}$ using only these time steps.
    \label{lemma:l4}
\end{lemma}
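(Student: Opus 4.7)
The plan is to argue by contradiction: suppose no strict journey $u\rightsquigarrow_k v$ uses only the time steps in $T = \{t_1 < \ldots < t_{kn}\}$, and associate to each $t \in T$ a distinct ``newly reached pair'' $(w,\ell)$ with $w\in V\setminus\{u,v\}$ and $\ell\in\{1,\ldots,k-1\}$; counting possibilities will then force $|T|<kn$, a contradiction.

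For each $t \in T$, let $S_{t^-}$ denote the set of pairs $(w',\ell')$ such that there is a strict journey of length at most $\ell'$ from $u$ to $w'$ using only time steps in $T$ strictly less than $t$. Writing the prescribed path as $P_t = (x_t^0,\ldots,x_t^{\ell_t})$ with $x_t^0=u$, $x_t^{\ell_t}=v$, $\ell_t\le k$, I define
\[
j^*_t \;=\; \max\bigl\{\, j\in\{0,\ldots,\ell_t\}\;:\;(x_t^j, j)\in S_{t^-}\,\bigr\}.
\]
This is well defined since $(u,0)\in S_{t^-}$. The central observation is that $j^*_t+1<\ell_t$: otherwise $j^*_t+1=\ell_t$, and extending the journey witnessing $(x_t^{j^*_t},j^*_t)\in S_{t^-}$ by the edge $(x_t^{j^*_t},v)\in E_t$ would yield a journey $u\rightsquigarrow_{\ell_t} v$ of length at most $k$, contradicting our assumption. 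Consequently the pair $(x_t^{j^*_t+1},\, j^*_t+1)$ has its vertex strictly interior to the simple path $P_t$, hence in $V\setminus\{u,v\}$, and its level in $\{1,\ldots,k-1\}$; moreover by maximality of $j^*_t$ this pair cannot already lie in $S_{t^-}$, so it is genuinely new.

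The counting step then finishes the proof: the set of admissible pairs $(w,\ell)$ with $w\in V\setminus\{u,v\}$ and $\ell\in\{1,\ldots,k-1\}$ has cardinality at most $(n-2)(k-1)<kn$, so at most $(n-2)(k-1)$ distinct time steps can each contribute a distinct new pair, contradicting $|T|\ge kn$. The step I expect to state most carefully is the ``newness'' of $(x_t^{j^*_t+1}, j^*_t+1)$ with respect to $S_{t^-}$; it reduces to the maximality in the definition of $j^*_t$ combined with the simplicity of $P_t$, which forbids its intermediate vertices from coinciding with $u$ or $v$. The degenerate case $\ell_t=1$ requires no extra work: it forces $j^*_t+1=1=\ell_t$, producing a journey of length $1$ and the desired contradiction immediately.
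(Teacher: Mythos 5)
Your proof is correct and follows essentially the same strategy as the paper's: both locate a ``frontier'' position on each prescribed path where reachability from $u$ can be extended by one edge, and bound the number of such improvement events by roughly $kn$ (the paper counts label decrements of a distance function $l_w^t$ via its $A$/$B$ partition of the path; you count newly reachable pairs $(w,\ell)$, which is the same monovariant with slightly tighter bookkeeping, $(n-2)(k-1)$ versus the paper's $(n-1)k$). The argument is sound, including the handling of $\ell_t=1$ and the implicit case $j^*_t=\ell_t$, which contradicts the standing assumption directly.
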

\begin{proof}
    See the proof \ref{proof:p1}.
\end{proof}

\begin{lemma}
    Let $G=(V,E)$ be a connected static graph of $n$ vertices and $k,\mbox{ such that } 1< k<n$. Let $Z$ be a subset of vertices of maximal size such that $\nexists{(u,v)\in Z},u \leftrightarrow_k v$, then we have $|Z|<2n/k$.
    \label{lemma:l5}
\end{lemma}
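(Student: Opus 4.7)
The plan is to pack disjoint balls around the vertices of $Z$. Set $r = \lfloor k/2 \rfloor$, and for each $v \in Z$ consider the closed ball $B(v, r) = \{x \in V : d_G(v, x) \le r\}$ under the graph distance $d_G$ in $G$.

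The first step is disjointness: for any two distinct $u, v \in Z$, the hypothesis that no $u \leftrightarrow_k v$ path exists means $d_G(u, v) > k \ge 2r$, so by the triangle inequality $B(u, r) \cap B(v, r) = \emptyset$.

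The second step is a lower bound on $|B(v, r)|$. The case $|Z| \le 1$ is immediate since $k < n$ forces $1 < 2n/k$, so I may assume there is some $w \in Z$ with $w \neq v$. By connectivity of $G$, a shortest $v$-$w$ path $P$ exists and has length $d_G(v, w) > k \ge 2r$; the $r+1$ vertices of $P$ at distances $0, 1, \dots, r$ from $v$ are pairwise distinct and all lie in $B(v, r)$, yielding $|B(v, r)| \ge r + 1$.

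Combining the two gives $|Z| \cdot (r+1) \le \sum_{v \in Z} |B(v, r)| \le n$. A short parity check shows $2(r+1) = 2\lfloor k/2 \rfloor + 2 \in \{k+1, k+2\}$ is always strictly greater than $k$, whence $|Z| \cdot k < 2|Z|(r+1) \le 2n$ and therefore $|Z| < 2n/k$. The argument presents no serious obstacle; the only subtlety is to treat $|Z| \le 1$ separately so that the lower bound on $|B(v,r)|$ is never invoked vacuously, and to keep track of the parity of $k$ so as to obtain the strict inequality rather than only $|Z| \le 2n/k$.
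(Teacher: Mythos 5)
Your proof is correct and takes essentially the same approach as the paper's: both pack pairwise disjoint balls of radius $\lfloor k/2\rfloor$ around the vertices of $Z$ (the paper calls them $Y_i$), lower-bound each ball by $\lfloor k/2\rfloor+1>k/2$ vertices using connectivity, and conclude by summing over $Z$. The only cosmetic difference is that you obtain the ball-size bound via a shortest path to another vertex of $Z$ (treating $|Z|\le 1$ separately), whereas the paper appeals directly to the connectedness of $G$.
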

\begin{proof}
    See the proof \ref{proof:p2}.
\end{proof}

We remark that, given a static graph $G$ of vertex set $V$, a real $k, 1< k<n$, and a set $Z$ of maximal size that verifies $\nexists{(u,v)\in Z},u \leftrightarrow_k v$, we have $\forall{u\in V\backslash Z}, \exists{v\in Z}$ such as there is a path $u \leftrightarrow_k v$. This is because if there weren't such path of size $k$, a vertex $u$ could be added to the set $Z$ which is a contradiction because $Z$ has a maximal size.\\
The two lemmas \ref{lemma:l4} and \ref{lemma:l5} are used to prove Lemma \ref{lemma:l6}. The Lemmastates that for a certain value of $k$, if $L$ is great enough, the size of $k$-journey-free sets have an explicit upper bound.
\begin{lemma}
Let $\mathcal{G}$ be a temporal graph, let $q\in\mathbb{N}^*$, let $S$ be a subset of vertices such that $|S|\leq n/q$ and let $k=2\sqrt{nq}$.
 If the lifetime $L$ verifies $L\geq 4\frac{n^{2.5}}{\sqrt{q}}$ then for every set $X_k^S$ we have $|X_k^S|< \sqrt{n/q}$.
    \label{lemma:l6}
\end{lemma}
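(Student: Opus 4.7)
The plan is to argue by contradiction, using a double-counting argument on pairs-that-are-close-in-a-snapshot. Suppose $|X_k^S| \geq \sqrt{n/q}$ and fix a subset $Y \subseteq X_k^S$ of cardinality $s := \lceil \sqrt{n/q} \rceil$; any subset of a $k$-journey-free set is still $k$-journey-free, so no pair of distinct vertices of $Y$ is connected by a journey of at most $k$ edges in either direction. Letting $P_i$ be the number of unordered pairs of $Y$ at $G_i$-distance at most $k$, I would bound $\sum_{i=1}^{L} P_i$ from above and below to derive a contradiction with the hypothesis $L \geq 4n^{2.5}/\sqrt{q}$.

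For the upper bound, the contrapositive of Lemma~\ref{lemma:l4} says that for any pair $\{u,v\} \subseteq Y$ the number of snapshots admitting a path $u \leftrightarrow_k v$ is strictly less than $kn$ (otherwise a journey $u \rightsquigarrow_k v$ would exist, contradicting $k$-journey-freeness of $Y$), so summing over pairs yields $\sum_i P_i < \binom{s}{2}\, kn$. For the lower bound, in each connected snapshot $G_i$ I would fix an arbitrary spanning tree and use its canonical closed DFS walk, which has length $2(n-1)$, uses only edges of $G_i$, and visits every vertex. Selecting one occurrence of each vertex of $Y$ along this walk gives $s$ marks on a cyclic sequence of length $2(n-1)$, so by pigeonhole two consecutive marks lie at walk-distance, hence graph-distance, at most $2(n-1)/s < 2n/s$. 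With $s \geq \sqrt{n/q}$ and $k = 2\sqrt{nq}$ this is at most $k$, so $P_i \geq 1$ in every snapshot and therefore $\sum_i P_i \geq L$.

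Combining the two inequalities gives $L \leq \sum_i P_i < \binom{s}{2}\, kn$. Substituting $s \leq \sqrt{n/q}+1$ and $k = 2\sqrt{nq}$ makes the right-hand side at most $n^{2.5}/\sqrt{q}$ up to lower-order terms, strictly less than $4n^{2.5}/\sqrt{q} \leq L$ in the regime where the lemma is meaningful (i.e., $k < n$, so $q < n/4$). The main obstacle I expect is calibrating the lower bound: with the spanning-tree closed walk of length $2(n-1)$, the pigeonhole threshold $2n/s \leq k$ is exactly tight at $s = \sqrt{n/q}$, and the constant $4$ in the lifetime hypothesis is precisely what is needed to absorb the slack coming from the ceiling in the definition of $s$ and from bounding $\binom{s}{2}$ by $s^2/2$.
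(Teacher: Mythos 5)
Your argument is correct, and its skeleton is the same as the paper's: assume $|X_k^S|\geq\sqrt{n/q}$ for contradiction, show that \emph{every} snapshot forces some pair of these vertices to be joined by a path of at most $k$ edges, pigeonhole over pairs and time steps to find one pair with $kn$ such snapshots, and invoke Lemma~\ref{lemma:l4} to produce a forbidden journey. Where you diverge is in two places, both to your advantage. First, you truncate $X_k^S$ to a subset $Y$ of size exactly $\lceil\sqrt{n/q}\rceil$ before counting; the paper instead works with all of $X_k^S$, whose size it cannot control from above, and is therefore forced into a two-case analysis ($2n/k\leq|X_k^S|<4n/k$ versus $|X_k^S|\geq 4n/k$) with a more delicate count in the second case. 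Your truncation makes the bound $\sum_i P_i<\binom{s}{2}kn$ immediate and removes the case split entirely. Second, your per-snapshot "close pair" fact is exactly the contrapositive of Lemma~\ref{lemma:l5}, which you could simply cite; instead you reprove it via the closed DFS walk of a spanning tree, which is a clean, self-contained alternative to the paper's disjoint-balls-of-radius-$k/2$ packing argument (and in fact yields the slightly cleaner bound $2(n-1)/s$). The only care needed is the degenerate regime: you need $s\geq 2$ for "two consecutive marks" to be a genuine pair, which holds precisely when $q<n/4$, i.e.\ when $k=2\sqrt{nq}<n$ and the lemma's objects are well defined — you note this, and the paper has the same implicit restriction. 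The arithmetic also checks out: $\frac{s^2}{2}kn$ with $s\leq\frac{3}{2}\sqrt{n/q}$ is at most $\frac{9}{4}\,n^{2.5}/\sqrt{q}<4\,n^{2.5}/\sqrt{q}\leq L$, giving the contradiction.
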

\begin{proof}
    See the proof \ref{proof:p3}.
\end{proof}
\begin{figure}[!h]
        \centering
        \includegraphics[scale=0.21]{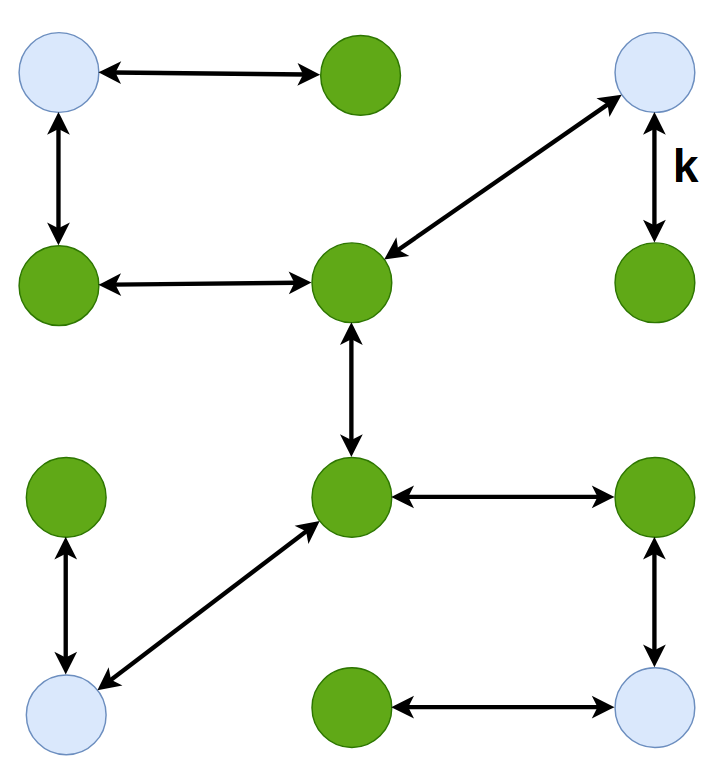}
        \caption{Representation of a subset of vertices $S$ in the graph $\mathcal{G}_{[t_1,t_2]}$ with $t_1+4\frac{n^{2.5}}{\sqrt{q}}\leq t_2$. We represent in blue the vertices of $X_k^S$, with $|X_k^S|<\sqrt{n/q}$, and in green the vertices of $S\backslash X_k^S$. Each arrow represents a journey of less than or equal to $k$ edges in $\mathcal{G}_{[t_1,t_2]}$. Note that each vertex of $S$ is either a vertex of $X_k^S$ or connected by a journey of $k$ edges or less (in either direction) to a vertex of $X_k^S$. This figure illustrates Lemma \ref{lemma:l6}.}
        \label{fig:imgsupp}
    \end{figure}
Lemma \ref{lemma:l6} shows that for any subset of vertices $S$ there is a subset of $S$ (i.e $X_k^S$) that covers all the vertices $S$ with journeys of $k$ edges such as presented in figure \ref{fig:imgsupp}. More formally, given a temporal graph $\mathcal{G}$ of vertex set $V$, a real $1< k<n$, and a set $X_k$, we have $\forall{u\in V\backslash X_k}, \exists{v\in X_k}$ such as there is a journey $u \rightsquigarrow_k v$ and a journey $v \rightsquigarrow_k u$. This is because if there wasn't such journeys of size $k$, a vertex $u$ could be added to the set $X_k$ which is a contradiction because $X_k$ has a maximal size. This property is illustrated by figure \ref{fig:imgsupp}.\\
This property is used in the proof of Lemma \ref{lemma:l7} to create a journey that, given a subset of vertices $S$, visits a fraction of the vertices of $S$. This journey is a concatenation of journeys of at most $k$ edges between the different vertices of $S$ visited with $k$ defined in the previous Lemma \ref{lemma:l6}. The construction of this journey is illustrated by figure \ref{fig:img3}.
\begin{lemma}
    Let $\mathcal{G}$ be a temporal graph, let $q\in\mathbb{N}^*$ and $S$ a subset of vertices such that $|S|\leq n/q$.
    If $L\geq 5\frac{n^3}{q}$, then there exists a journey in $\mathcal{G}$ visiting $|S|\sqrt{q/n}$ vertices of $S$ by traversing at most $2n$ edges.
    \label{lemma:l7}
\end{lemma}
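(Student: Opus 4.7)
The plan is to build the required journey as a concatenation of short sub-journeys, each living inside a single time window to which Lemma~\ref{lemma:l6} can be applied. Set $k = 2\sqrt{nq}$ and $T \approx |S|\sqrt{q/n}$, and partition $[1,L]$ into $T$ consecutive windows $W_1,\dots,W_T$ of equal length. Because $L \geq 5n^{3}/q$ and $|S| \leq n/q$, each window has length at least $4n^{2.5}/\sqrt{q}$, which is exactly the threshold required by Lemma~\ref{lemma:l6}. Consequently, in every window $W_i$ and for every $S' \subseteq V$ with $|S'| \leq n/q$, every maximal $k$-journey-free set $X_k^{S'}$ computed inside $W_i$ satisfies $|X_k^{S'}| < \sqrt{n/q}$.

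Starting from any vertex $v_0 \in S$, I would inductively construct a sequence $v_1,\dots,v_T$ of pairwise distinct vertices of $S$ such that there is a journey $v_{i-1}\rightsquigarrow_k v_i$ entirely contained in $W_i$. At step $i$, consider the set $S'_i = S \setminus \{v_0,\dots,v_{i-2}\}$; note that $v_{i-1} \in S'_i$ and $|S'_i| \leq |S| \leq n/q$. Lemma~\ref{lemma:l6} applied to $S'_i$ inside $W_i$ yields a maximal $k$-journey-free set $X_k^{S'_i}$ with $|X_k^{S'_i}| < \sqrt{n/q}$, and the maximality property stated just after Lemma~\ref{lemma:l6} ensures that every vertex of $S'_i \setminus X_k^{S'_i}$ is linked, inside $W_i$, to some vertex of $X_k^{S'_i}$ by journeys of at most $k$ edges in both directions. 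When $v_{i-1}$ lies in $S'_i \setminus X_k^{S'_i}$, one such neighbour in $X_k^{S'_i}$ serves as $v_i$: it belongs to $X_k^{S'_i} \subseteq S \setminus \{v_0,\dots,v_{i-2}\}$ and, being different from $v_{i-1}$, is genuinely new.

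Concatenating the $T$ windowed sub-journeys yields a single journey that visits $T + 1 \geq |S|\sqrt{q/n}$ distinct vertices of $S$, of total length at most
\[
kT \;\leq\; 2\sqrt{nq}\cdot|S|\sqrt{q/n} \;=\; 2q|S| \;\leq\; 2n,
\]
giving the edge bound claimed in the lemma.

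The \textbf{main obstacle} is the case $v_{i-1} \in X_k^{S'_i}$, where the maximality property does not directly produce a vertex reachable from $v_{i-1}$ in at most $k$ edges inside $W_i$. I expect to handle it by exploiting the freedom in the choice of the maximal set $X_k^{S'_i}$: if $v_{i-1}$ admits any short journey in $W_i$ to or from some other vertex of $S'_i$, one can pick an $X_k^{S'_i}$ that excludes $v_{i-1}$ and fall back into the previous case; any remaining truly degenerate windows can be absorbed by allotting a small amount of slack in the partition (the budget $L \geq 5n^3/q$ is slightly larger than the $4n^3/q$ actually consumed). Making this last step fully rigorous is the technically delicate part of the proof.
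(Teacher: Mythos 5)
Your skeleton (windows of length $4n^{2.5}/\sqrt{q}$, journeys of at most $k=2\sqrt{nq}$ edges per window, $T\leq\sqrt{n/q}$ hops, total $kT\leq 2n$ edges) matches the paper's, but the chaining step has a genuine gap, and it is exactly the one you flag. The maximality property behind Lemma~\ref{lemma:l6} only supplies \emph{predecessors}: every vertex \emph{outside} $X_k^{S'_i}$ is linked in both directions to some vertex \emph{inside} it. It says nothing about a vertex that lies inside $X_k^{S'_i}$, and your proposed repairs do not close this. In particular, $v_{i-1}$ may have no $k$-edge journey to or from \emph{any} other vertex of $S'_i$ within $W_i$; such a vertex belongs to every maximum-size $k$-journey-free set (it can always be added without creating a conflicting pair), so no choice of $X_k^{S'_i}$ excludes it, and this situation is fully consistent with the bound $|X_k^{S'_i}|<\sqrt{n/q}$ of Lemma~\ref{lemma:l6}. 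Nor can slack in $L$ absorb it: waiting in later windows gives no guarantee that $v_{i-1}$ ever acquires a short journey to an unvisited vertex, and relocating first would cost edges, which is the resource the lemma is bounding. So the forward greedy chain can get stuck.

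The paper's proof avoids this by never asking for a successor of a vertex inside a journey-free set. It first fixes the pairwise disjoint sets $S_i = X_k^{S\setminus(S_1\cup\dots\cup S_{i-1})}$, each computed in its own window $\mathcal{G}_{[(i-1)C,iC]}$, shows each is nonempty and of size $<\sqrt{n/q}$, and observes that every vertex of $S_{i+1}$ lies outside $S_i$ in the relevant window, hence has a bidirectional $k$-journey to some vertex of $S_i$ there. The chain is then selected \emph{backwards}: pick $u_l\in S_l$, let $u_{l-1}\in S_{l-1}$ be its guaranteed predecessor in window $l-1$, and so on down to $u_1$. Every link uses the maximality property in the direction it actually provides, distinctness of the $u_i$ is automatic from disjointness of the $S_i$, and the starting vertex $u_1$ is only determined at the end --- which is permissible since Lemma~\ref{lemma:l7} does not prescribe a starting vertex (the anchoring at $s$ is handled later, in Lemma~\ref{lemma:l8}, via Lemma~\ref{lemma:l1}). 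Reorienting your induction in this way is the missing idea.
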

\begin{proof}
    See the proof \ref{proof:p4}.
\end{proof}
\begin{figure}[!h]
    \centering
    \includegraphics[scale=0.2]{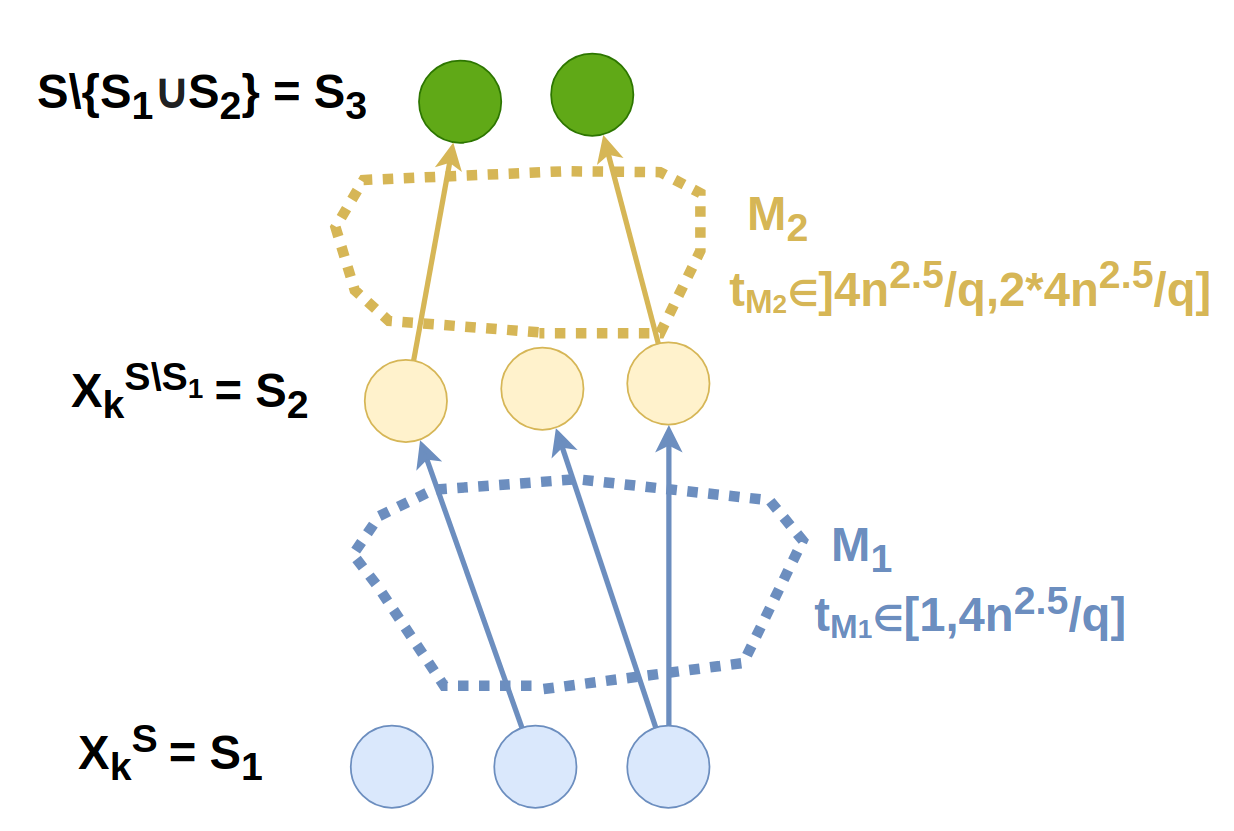}
    \caption{A representation of the set S partitioned into three subsets $\{S_1,S_2,S_3\}$ which are connected by journeys of the set $M=\{M_1,M_2\}$ of lengths at most $k$. We also represent the time windows associated with the journeys of $M_1$ and $M_2$ (see the proof \ref{proof:p4} for definitions of $S_i$ and $M_i$).This figure illustrates the proof of Lemma \ref{lemma:l7}.}
    \label{fig:img3}
\end{figure}
Lemma \ref{lemma:l7} allows us to prove Lemma \ref{lemma:l8} which states that, given any subset of vertices $S$, a journey traversing $O(n^{1.5})$ edges can visit half of the vertices of $S$ if $L$ is great enough. This journey is a concatenation of journeys such as represented on figure \ref{fig:img4}.
\begin{lemma}
     Let $\mathcal{G}$ be a temporal graph with a starting vertex $s$, let $q\in\mathbb{N}^*$ with $q<n/4$ and $S$ a subset of vertices such that $|S|= n/q$.
     If the lifetime of $\mathcal{G}$ is $L\geq 6\frac{n^{3.5}}{q^{1.5}}$ then there exists a journey, starting at $s$, visiting $|S|/2$ vertices of $S$ and traversing at most $4\frac{n^{1.5}}{\sqrt{q}}$ edges. 
     \label{lemma:l8}
\end{lemma}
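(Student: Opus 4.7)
The plan is to iteratively invoke Lemma \ref{lemma:l7}, peeling off a fraction $\sqrt{q/n}$ of the still-unvisited target vertices at each round and stitching the resulting sub-journeys together using the reachability guaranteed by Lemma \ref{lemma:l1}. I initialise $T_1=1$, $S_1=S$ and $v_1=s$; at the start of round $i$ the agent sits at vertex $v_i$ at time $T_i$, and $S_i\subseteq S$ denotes the subset of $S$ still to be visited.

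Round $i$ is carried out as follows. I first apply Lemma \ref{lemma:l7} to the sub-temporal-graph $\mathcal{G}_{[T_i+n-1,\,T_i+n-2+5n^3/q]}$ with the subset $S_i$; this is legitimate because $|S_i|\le|S|=n/q$ and the lifetime of the sub-graph is exactly $5n^3/q$. The lemma produces a journey $J_i$ starting at some vertex $u_i$, visiting $|S_i|\sqrt{q/n}$ vertices of $S_i$, and using at most $2n$ edges. Lemma \ref{lemma:l1} applied at time $T_i$ then provides a strict journey from $v_i$ to $u_i$ inside the window $[T_i,T_i+n-2]$ with at most $n-1$ edges. Concatenating these two pieces yields a sub-journey of at most $3n-1$ edges, occupying at most $n-1+5n^3/q$ time steps. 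I then set $v_{i+1}$ to the endpoint of $J_i$, $T_{i+1}$ to one time step after the end of $J_i$, and $S_{i+1}:=S_i$ minus the vertices visited by $J_i$.

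The iteration continues until at least $|S|/2=n/(2q)$ vertices of $S$ have been visited. Whenever this threshold has not been reached, $|S_i|>n/(2q)$, so round $i$ visits at least $(n/(2q))\sqrt{q/n}=\tfrac{1}{2}\sqrt{n/q}$ fresh vertices of $S$. Hence at most $\sqrt{n/q}$ rounds are required, giving a total of at most $3n\cdot\sqrt{n/q}=3n^{1.5}/\sqrt{q}$ edges, safely below $4n^{1.5}/\sqrt{q}$ once the single integer round-up is absorbed using $q<n/4$ (which makes $\sqrt{n/q}>2$). The total number of time steps consumed is at most $\sqrt{n/q}\cdot(n-1+5n^3/q)\le n^{1.5}/\sqrt{q}+5n^{3.5}/q^{1.5}\le 6n^{3.5}/q^{1.5}$, which matches the lifetime hypothesis (the inequality $n^{1.5}/\sqrt{q}\le n^{3.5}/q^{1.5}$ is just $q\le n^2$).

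The main obstacle is purely bookkeeping: I have to confirm that every sub-window on which Lemma \ref{lemma:l7} is invoked has the required lifetime $5n^3/q$ and that the concatenation of all sub-windows still fits inside $[1,L]$. Because the sub-windows are disjoint and of total length at most $6n^{3.5}/q^{1.5}$, both points follow directly from the assumption $L\ge 6n^{3.5}/q^{1.5}$. No further combinatorial insight is required beyond the iterative halving argument.
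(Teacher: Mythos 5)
Your proof is correct and follows essentially the same route as the paper's: repeatedly apply Lemma~\ref{lemma:l7} to the set of still-unvisited vertices of $S$ inside disjoint time windows of length $5n^3/q$, gluing consecutive pieces together with the $(n-1)$-step, $(n-1)$-edge journeys of Lemma~\ref{lemma:l1}. The only divergence is bookkeeping --- the paper tracks a yield of $p-i+1$ fresh vertices in round $i$ and then appends about $p/2$ extra Lemma~\ref{lemma:l1} journeys to cover the shortfall, whereas you lower-bound every round uniformly by $\tfrac12\sqrt{n/q}$, which is cleaner; just note that absorbing the extra round from the ceiling $\lceil\sqrt{n/q}\rceil$ into the constant $4$ really requires $\sqrt{n/q}\geq 3$ rather than $>2$ (the remaining cases $n/q\leq 8$ are trivial anyway, since then $|S|/2\leq 4$ vertices can be collected by at most four applications of Lemma~\ref{lemma:l1} alone, well within both budgets).
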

\begin{proof}
    See the proof \ref{proof:p5}.
\end{proof}

\begin{figure}[!h]
        \centering
        \includegraphics[scale=0.2]{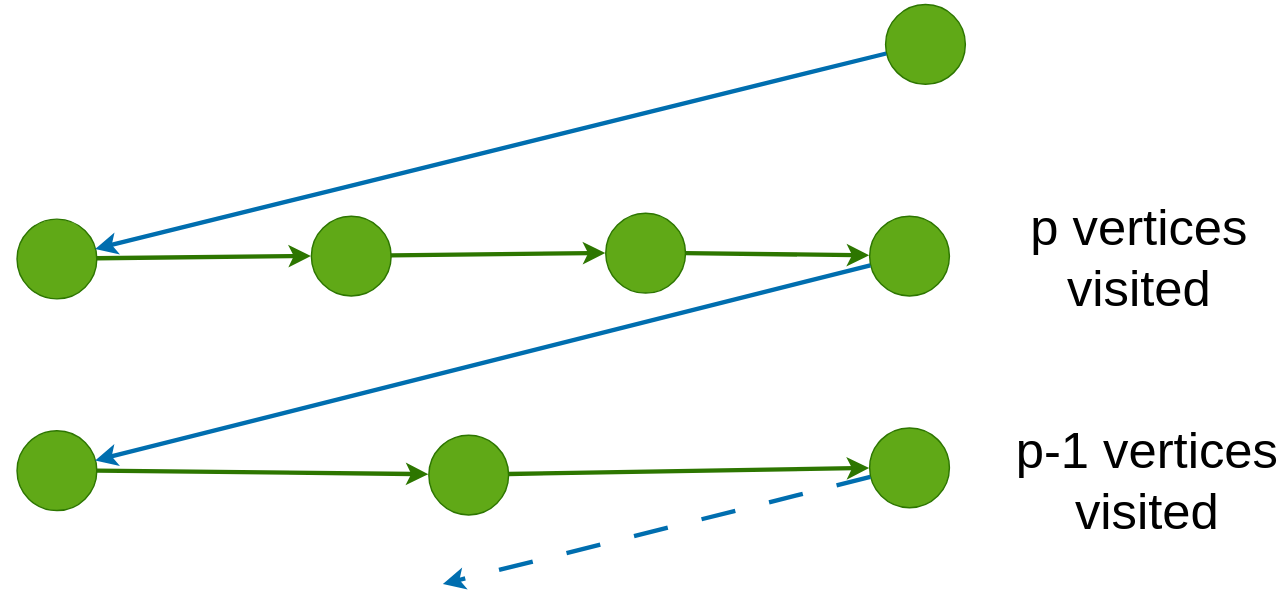}
        \caption{A representation of the concatenated journeys forming the journey described in Lemma \ref{lemma:l8} where the vertices of $S$ are represented in green. A blue arrow is a journey traversing at most $n-1$ edges such as defined in Lemma \ref{lemma:l1} (journey of type $2$). And a set of green arrows that follow each other is a journey of type $1$ such as defined in Lemma \ref{lemma:l7} that traverses at most $2n$ edges and visit between $p$ and $1$ vertices of $S$(see the proof \ref{proof:p5} for a definition of $p$). In total these concatenated journeys visit $|S|/2$ vertices of the set $S$.}
        \label{fig:img4}
    \end{figure}
The theorem \ref{theorem:t1} is a corollary of Lemma \ref{lemma:l8}.
\begin{theorem}
\label{theorem:t1}
     Let $\mathcal{G}$ be a temporal graph with a lifetime $L\geq 12n^{3.5} + 4n$.
     Then there exists an exploration of $\mathcal{G}$ traversing $O(n^{1.5})$ edges.
\end{theorem}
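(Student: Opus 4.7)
The plan is to apply Lemma~\ref{lemma:l8} iteratively with geometrically growing parameter $q_i=2^i$, roughly halving the set of still-unvisited vertices at each step. Write $U_i$ for the set of unvisited vertices at step $i$, $s_i$ for the agent's current position, and $t_i$ for the current time step; the inductive invariant is $|U_i|\le n/2^i$. If already $|U_i|\le n/2^{i+1}$ we skip to step $i+1$; otherwise we pad $U_i$ with arbitrary already-visited vertices to form a set $S_i$ of cardinality exactly $n/q_i=n/2^i$ and invoke Lemma~\ref{lemma:l8} on the suffix temporal graph $\mathcal{G}_{[t_i,L]}$ with starting vertex $s_i$, parameter $q_i$ and set $S_i$. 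This produces a journey using at most $4n^{1.5}/\sqrt{q_i}$ edges and $6n^{3.5}/q_i^{1.5}$ time steps which visits $|S_i|/2=n/2^{i+1}$ vertices of $S_i$; since the padding has size $n/2^i-|U_i|<n/2^{i+1}$, at least $|U_i|-n/2^{i+1}$ of those visits lie in $U_i$, so $|U_{i+1}|\le n/2^{i+1}$. We iterate until $|U_i|$ has been reduced to a constant (which happens at $i=\lceil\log_2(n/4)\rceil$, just before the constraint $q_i<n/4$ in Lemma~\ref{lemma:l8} is violated), and then finish the exploration by applying Lemma~\ref{lemma:l1} a constant number of times at a total cost of at most $4(n-1)$ edges and $4(n-1)$ time steps.

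The bookkeeping then reduces to two geometric estimates. The total edge count satisfies
\[
\sum_{i\ge 0}\frac{4n^{1.5}}{2^{i/2}}+4n\;\le\;\frac{4}{1-2^{-1/2}}\,n^{1.5}+4n\;=\;O(n^{1.5}),
\]
and the total lifetime consumed satisfies
\[
\sum_{i\ge 0}\frac{6n^{3.5}}{2^{3i/2}}+4n\;\le\;\frac{6}{1-2^{-3/2}}\,n^{3.5}+4n\;\le\;12\,n^{3.5}+4n,
\]
matching exactly the theorem's hypothesis (since $6/(1-2^{-3/2})\approx 9.28<12$). The decreasing tail of the second sum also certifies that each successive invocation of Lemma~\ref{lemma:l8} is legal, because the remaining lifetime $L-t_i$ always exceeds the required $6n^{3.5}/q_i^{1.5}$.

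I expect the main obstacle to be the padding step, because Lemma~\ref{lemma:l8} only guarantees visits to $|S|/2$ possibly adversarial vertices of $S$: one must check that padding cannot absorb every visit in the branch where real progress is required. This is precisely the content of the inequality $n/2^i-|U_i|<n/2^{i+1}$, which is valid exactly in the ``invoke'' branch and which forces at least one of the visits to land in $U_i$. A secondary technical concern is the integer rounding of $n/2^i$ when $n$ is not a power of two; this affects only constants, which are comfortably absorbed by the slack between $9.28$ and $12$ in the lifetime hypothesis and by the $O(\cdot)$ in the edge bound.
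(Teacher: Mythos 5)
Your proof is correct and follows essentially the same route as the paper's: iterate Lemma~\ref{lemma:l8} with $q=2^i$ to halve the set of unvisited vertices, bound the edges and time steps by the two geometric series, and finish off the last constantly many vertices with applications of Lemma~\ref{lemma:l1}. Your padding argument is a minor but welcome refinement that justifies the exact-cardinality hypothesis $|S|=n/q$ of Lemma~\ref{lemma:l8}, a point the paper's proof silently glosses over.
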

\begin{proof}
    Let $S$ be the set of unvisited vertices. Initially we set $S=V$ (to simplify our proof, we ignore the fact that the starting vertex $s$ has already been visited). To explore $\mathcal{G}$, we traverse a journey visiting $|S|/2$ vertices of the set $S$ as defined in Lemma \ref{lemma:l8} as long as we have $|S|>4$. Initially we have $|S|=n$, then $|S|=n/2$, then $|S|=n/4$, etc. So we traverse a journey as defined in Lemma \ref{lemma:l8} at most $\log_2n-2$ times to explore in total at least $n-4$ distinct vertices of $\mathcal{G}$. After these journeys we can visit the remaining $4$ unvisited vertices of $\mathcal{G}$ by making $4$ journeys of $n-1$ time steps and $n-1$ edges.\\
    To compute the number of time steps and edges in the algorithm, we first show that $\sum_{i=0}^{\log_2(n)-2}\frac{1}{\sqrt{2^i}}<4$. Indeed $\sum_{i=0}^{\log_2(n)-2}\frac{1}{\sqrt{2^i}}<\sum_{i=0}^{\infty}\frac{1}{\sqrt{2^i}}<2+2/\sqrt{2}<4$.\\

    We compute the number of edges traversed in the $\log_2n-2$ journeys that each visit half of the remaining unvisited vertices: $\sum_{i=0}^{\log_2(n)-2}4\frac{n^{1. 5}}{\sqrt{2^i}} =4n^{1.5}\sum_{i=0}^{\log_2(n)-2}\frac{1}{\sqrt{2^i}}<16n^{1.5}$. 
    
    So the journey of the exploration algorithm traverses less than $16n^{1.5} + 4(n-1)$ edges, i.e. $O(n^{1.5})$ edges.\\
    In the same way, we compute the number of time steps required to traverse these $\log_2(n)-2$ journeys each visit half of the remaining unvisited vertices: $\sum_{i=0}^{\log_2(n)-2}6\frac{n^{3. 5}}{2^i\sqrt{2^i}} < 6n^{3.5}\sum_{i=0}^{\log_2(n)-2}\frac{1}{2^i} < 6n^{3.5}\times2=12n^{3.5}$.
    So the complete journey of the exploration algorithm requires less than $12n^{3.5} + 4(n-1)<L$ time steps.
    So there is an exploration of $\mathcal{G}$ in $O(n^{3.5})$ time steps traversing $O(n^{1.5})$ edges.
\end{proof}

\section{The STEXP for restricted classes of temporal graphs}
\label{sec:restricted}
The result for the general case presents an upper bound of $O(n^{1.5})$ edges for an exploration with $O(n^{3.5})$ time steps, and we have a lower bound of $\Omega(n)$ edges traversed for temporal graphs with the same lifetime. So our upper bound on the number of edges does not close the gap with the lower bound in the context of STEXP. This motivates us to study more restricted classes of temporal graphs. First we study temporal graphs with a bounded diameter at each time step and we prove that we have an upper bound that close the gap with the lower bound for the number of edges traversed if the temporal graph has a lifetime in $O(n^2)$. Next we study the case where the underlying graph is a cycle, we also prove tight bounds for the number of edges if $L\geq 2n-3$ and we present an interesting result where the worst-case scenario for the number of edges traversed is different for $L=2n-3$ and $L> 2n-3$. 

\subsection{The case of bounded-diameter temporal graphs}
\label{subsec:diam}
\begin{theorem}
    Let $\mathcal{G}=(V,E_t),t\leq L$ be a temporal graph of $n$ vertices and $0< k< n$ an integer, the lifetime of the graph is $L\geq kn^2$.
    If $\forall t\leq L$, the snapshot $G_t$ has a diameter less than or equal to $k$, then there exists an exploration of $\mathcal{G}$ traversing less than $kn$ edges.
\end{theorem}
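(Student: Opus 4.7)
The plan is to reduce the theorem to a single reachability statement: in a temporal graph whose snapshots all have diameter at most $k$, for any two vertices $u, v$ and any starting time $t_0$ with $t_0 + k(n-1) \le L$, there is a strict journey from $u$ to $v$ of at most $k$ edges arriving by time $t_0 + k(n-1)$. Once this is established, the theorem follows immediately: starting at $s$, one iterates this claim, each time moving to an arbitrary unvisited vertex; after at most $n-1$ hops every vertex has been visited, for a total of at most $k(n-1) < kn$ edge traversals in at most $k(n-1)^2 < kn^2$ time steps, which fits inside the available lifetime.

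To prove the reachability claim I would introduce, for each integer $0 \le j \le k$, the set $R_t^j$ of vertices reachable from $u$ by a strict journey of at most $j$ edges whose arrival time lies in $[t_0, t]$. Define the potential $g(t) = \sum_{j=0}^{k} |R_t^j|$. Then $g(t_0) = k+1$ (each $R_{t_0}^j$ equals $\{u\}$), the sequence $g$ is non-decreasing in $t$, and $g(t) \le kn+1$ because $|R_t^0| = 1$ always and $|R_t^j| \le n$ for $j \ge 1$. Consequently $g$ can strictly increase at most $k(n-1)$ times before it must stabilise.

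The decisive observation is that whenever $g(t+1) = g(t)$, one already has $R_t^k = V$. Saturation says that for every $j$ and every edge $(w, w') \in E_{t+1}$ with $w \in R_t^{j-1}$, the endpoint $w'$ already lies in $R_t^j$. A straightforward induction on $j$ then shows that $R_t^j$ contains every vertex within distance $j$ from $u$ in the snapshot $G_{t+1}$: the base case $j = 0$ is immediate, and for the inductive step a shortest path of length $j$ from $u$ to any $w$ in $G_{t+1}$ has its penultimate vertex in $R_t^{j-1}$ by the induction hypothesis, so the saturation condition forces $w \in R_t^j$. Since $G_{t+1}$ has diameter at most $k$, this yields $R_t^k \supseteq V$, meaning every target is reachable from $u$ in at most $k$ edges by time $t$.

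The main obstacle is precisely this BFS-layer induction in the saturation step: it is the only point where the diameter assumption is used, and it requires carrying the whole nested chain $R_t^0 \subseteq R_t^1 \subseteq \cdots \subseteq R_t^k$ rather than only the eventual reachable set. Everything else---the potential bound, the monotonicity of $g$, and the iteration of hops to assemble the exploration---is routine bookkeeping once the reachability lemma is in hand.
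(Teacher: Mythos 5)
Your proof is correct and follows essentially the same route as the paper: the paper invokes its Lemma~\ref{lemma:l4}, whose label-decrement counting (cases 2 and 3 there, occurring at most $k(n-1)$ times) is exactly your potential $g(t)=\sum_{j}|R_t^j|$ in disguise, and then concatenates $n-1$ journeys of at most $k$ edges over a permutation of the vertices exactly as you do. Your self-contained version is marginally sharper (a window of $k(n-1)$ rather than $kn$ time steps per hop) and uses the full BFS layering of the snapshot rather than following a single diameter-$k$ path, but these differences are cosmetic.
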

\begin{proof}
    By Lemma \ref{lemma:l4}, given two vertices $u,v$, if there are $kn$ distincts snapshots that include a path $u\leftrightarrow_k v$ then there is a journey $u\rightsquigarrow_k v$. We remark that in a snapshot $G_i$ of diameter less than or equal to $k$, $\forall (u,v)\in V^2, \exists u\leftrightarrow_k v$. So in every time window of size $kn$, by assumption we have $\forall (u,v)\in V^2, \exists u\rightsquigarrow_k v$. Therefore we  construct an exploration of the temporal graph by chosing any permutation $P$ of the vertices beginning by the starting vertex $u_1$: $P=[u_1,u_2,...,u_n]$ and $\forall i, 0<i<n$ the agent makes a journey $u_i\rightsquigarrow_k u_{i+1}$ in the time window $[1+(i-1)kn,1+ikn[$ of size $kn$. So we create an exploration of $\mathcal{G}$ in $kn(n-1)< L$ time steps by traversing at most $k(n-1)<kn$ edges.
\end{proof}
Erlebach et al.\cite{erlebach_temp_explo} have presented a family of temporal graphs such that each snapshot has a diameter $2$ and all explorations take $\Omega(n^2)$ time steps with $\Omega(n)$ edges traversed. We have proven that, for each temporal graph with $L=\Omega(n^2)$, there is an exploration in $O(n^2)$ time steps and $O(n)$ edges traversed if each snapshot of $\mathcal{G}$ has a bouded diameter. Hence these bounds are tight. 
\subsection{The case of the cycle as the underlying graph}
\label{subsec:cycle}
In this section the following notations are used: $\mathcal{C}$ is a temporal cycle, i.e a temporal graph  whose underlying graph is a cycle, denoted $C$ and  The starting vertex of the exploration is a given vertex $s$. Morever we denote $m$ the edge of $C$ such that the number of edges from $s$ to the left extremity of $m$ is exactly $\lfloor \frac{n-1}{2} \rfloor$ and the number of edges from $s$ to the right extremity of $m$ is exactly $\lceil \frac{n-1}{2} \rceil$.
\begin{lemma}{cycle exploration time\cite{ilcinkas_ring}}
    \label{lemma:l2}
    \\Let $\mathcal{C}$ be a temporal cycle of lifetime $L\geq2n-3$ and $s$ the starting vertex, then there is an exploration of $\mathcal{C}$ starting at $s$.
\end{lemma}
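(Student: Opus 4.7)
The plan is to prove the lemma by constructing an explicit exploration strategy and bounding its running time by $2n-3$ through a direct case analysis.

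The starting point is a structural observation about the snapshots. Since each $G_t$ is a connected spanning subgraph of the cycle $C$ on $n$ vertices, and every connected spanning subgraph of $C$ has at least $n-1$ of its $n$ edges, each $G_t$ is either $C$ itself or $C$ with exactly one edge removed. Hence at every time step at most one edge of $C$ is unavailable, and in particular at every vertex at least one incident edge is always present.

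Orient $C$ so that $s=v_0$ has clockwise neighbor $v_1$ and counterclockwise neighbor $v_{n-1}$, with the right and left extremities of $m$ being $v_R$ and $v_{R+1}$ where $R=\lceil(n-1)/2\rceil$. I would use a \emph{commit-and-sweep} strategy: the agent first walks clockwise, advancing by one edge per time step as long as the clockwise edge at its current vertex is available; the first time the clockwise edge is missing, the agent reverses direction and walks counterclockwise thereafter. Analyzing this leads to two cases. If the clockwise phase reaches $v_R$ unobstructed, it uses exactly $R$ time steps, and the counterclockwise phase then walks through $s$ to the left extremity $v_{R+1}$ in $n-1$ additional time steps, visiting all remaining vertices, for a total of $R+(n-1)\leq\lceil 3(n-1)/2\rceil\leq 2n-3$. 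If the clockwise phase is blocked at some $v_k$ with $k<R$, the agent can use the blocking time step itself to make a counterclockwise move (possible because only the clockwise edge at $v_k$ is missing, so the counterclockwise edge is still present), and then the counterclockwise phase traverses $k$ edges back to $s$, $\lfloor(n-1)/2\rfloor$ edges to $v_{R+1}$, crosses $m$, and walks $R-k-1$ more edges to $v_{k+1}$, for a total of $n-1$ counterclockwise-phase edges and $k+(n-1)\leq 2n-3$ overall time steps.

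The main obstacle is handling possible additional waits during the counterclockwise phase when its desired edge is itself temporarily missing in some snapshot; such waits could threaten to push the total past $2n-3$. To address them, I would refine the strategy so that, instead of waiting passively at a vertex $v$ whose counterclockwise edge is missing, the agent makes a one-step clockwise detour (which is available, again by the structural observation) and then retries counterclockwise. A charging argument then bounds the total cost of such detours by the slack $2n-3-\lceil 3(n-1)/2\rceil$, which is nonnegative for $n\geq 3$. The delicate part is making this charging tight in the boundary cases (small $n$, or $k$ close to $R$), where the slack shrinks to zero and the agent must exploit its foreknowledge of the full edge schedule to commit in advance to whichever direction, clockwise-first or its mirror counterclockwise-first variant, avoids waits altogether.
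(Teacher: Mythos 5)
There is a genuine gap. Your structural observation is correct and useful: each snapshot is a connected spanning subgraph of $C$, hence misses at most one edge, so at every vertex at least one incident edge is present at every time step. The arithmetic in your unobstructed cases is also fine. But the entire difficulty of the lemma lies exactly where your argument is hand-waved: a committed directional sweep can be blocked again and again. Since the adversary fixes the schedule but may remove, at every time step, precisely the edge your counterclockwise phase wants to traverse next, a ``sweep counterclockwise, coping locally with blocks'' strategy makes no progress at all on some temporal cycles; there is no a priori bound on the number of blocking events, so there is nothing for your charging argument to charge against. Worse, the proposed fix is counterproductive: a one-step clockwise detour followed by a retry returns the agent to the same vertex two time steps later, which is strictly dominated by waiting one step, and visits no new vertex (the clockwise neighbour was already visited during the first phase). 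Your closing sentence---that the agent should ``commit in advance to whichever direction avoids waits altogether''---asserts without proof that one of the two mirror strategies is wait-free, and that assertion is essentially the theorem itself.

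For comparison, the paper does not prove this lemma; it cites Ilcinkas et al.\ \cite{ilcinkas_ring}. The known route goes through the property stated as Lemma~\ref{lemma:l3}: for every start time $t$ there exists a vertex $s_h(t)$ (resp.\ $s_a(t)$) from which an agent moving always clockwise (resp.\ counter-clockwise) completes a full sweep in $n-1$ steps \emph{without ever being blocked}. That existence statement (itself a pigeonhole-type argument over the $n$ possible starting positions, using your ``at most one missing edge per snapshot'' observation) is what replaces your unproven no-blocking assumption; the remaining work in \cite{ilcinkas_ring} is the delicate part of showing the agent can reach such a non-blocking starting vertex early enough to finish within $2n-3$ rather than $2n-2$ steps (the paper's own Lemma~\ref{lemma:l1_cycle} only achieves $2n-2$ by this route, and Lemma~\ref{lemma:l3_cycle} shows $2n-3$ is tight). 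To repair your proof you would need to establish something of the strength of Lemma~\ref{lemma:l3} and then carry out that final case analysis; the commit-and-sweep scheme as written does not get there.
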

\begin{lemma}{cycle property for the exploration\cite{erlebach_temp_explo}}
    \label{lemma:l3}
    \\Let $\mathcal{C}$ be a temporal cycle of lifetime $L\geq n-1$. Then for every $t$ such as $t\leq L-n+2$ there is a vertex called $s_h(t)$ (resp. $s_a(t)$) such that an agent, moving constantly clockwise (resp. counter-clockwise) on the cycle starting at $s_h(t)$ (resp. $s_a(t)$) at the time step $t$, explores $\mathcal{C}$ in $n-1$ time steps without being blocked by any absent edges.
\end{lemma}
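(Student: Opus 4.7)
The plan is to prove this by a direct counting argument, exploiting the fact that since the underlying graph is a cycle $C$ and every snapshot must be connected, each snapshot $G_{t+i}$ is either $C$ itself or $C$ minus exactly one edge. For $i = 0, 1, \ldots, n-2$, let $f_i$ denote that missing edge at time $t+i$ (or set $f_i = \perp$ if no edge is missing at that step).

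I would treat the clockwise case; the counter-clockwise case is symmetric. For a starting vertex $v$, let $e_v^{(i)}$ be the $(i+1)$-th clockwise edge encountered when walking clockwise from $v$ along $C$. An agent that moves constantly clockwise starting from $v$ at time $t$ performs an exploration of $\mathcal{C}$ in $n-1$ steps if and only if the edge it needs at each step is present, i.e. $e_v^{(i)} \neq f_i$ for every $i \in \{0, 1, \ldots, n-2\}$. Call $v$ \emph{bad} if this condition fails for some~$i$.

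The key observation is that for each fixed $i$, the map $v \mapsto e_v^{(i)}$ is a bijection from $V$ onto the edge set of $C$: fixing the clockwise offset $i$ simply translates the vertex around the cycle by one edge. Consequently, at each time step $t+i$ with a missing edge, there is exactly one vertex $v$ for which $e_v^{(i)} = f_i$, and at time steps where nothing is missing no vertex is blocked. Taking the union over $i = 0, 1, \ldots, n-2$, the set of bad starting vertices has size at most $n-1$. Since $|V| = n$, at least one vertex survives; choose any such vertex as $s_h(t)$ and the clockwise agent starting there completes the exploration without ever being blocked. The counter-clockwise choice $s_a(t)$ follows by the analogous argument with the orientation reversed.

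There is no genuinely hard step here; the only point to be careful about is that the same vertex can be blocked at several different time steps $t+i$, but the union bound over the $n-1$ indices still only forbids at most $n-1$ distinct starting vertices, strictly fewer than $n$, which suffices to guarantee existence of $s_h(t)$ and $s_a(t)$.
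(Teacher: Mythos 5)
Your proof is correct: since every snapshot of a constantly connected temporal cycle misses at most one edge, and for each offset $i$ the map $v\mapsto e_v^{(i)}$ is a bijection between the $n$ vertices and the $n$ edges of $C$, the union bound over the $n-1$ offsets excludes at most $n-1$ starting vertices, leaving at least one valid choice for $s_h(t)$ (and symmetrically $s_a(t)$). The paper states this lemma as a cited result from Erlebach et al.\ without reproducing a proof, and your argument is exactly the standard one underlying that citation, so there is nothing to add.
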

\subsubsection{The upper and lower bounds on the number of edges for an exploration in $2n-2$ time steps}
\begin{lemma}
    Let $\mathcal{C}$ be a temporal cycle with a lifetime $L\geq 2n-2$ and a starting vertex $s$. Then there is an exploration of the graph starting from $s$ and traversing at most $\lfloor \frac{3}{2}(n-1) \rfloor$ edges.
    \label{lemma:l1_cycle}
\end{lemma}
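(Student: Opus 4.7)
Set $\ell = \lfloor (n-1)/2 \rfloor$ and $r = \lceil (n-1)/2 \rceil$, so that $\ell + r = n-1$ and $2\ell + r = \lfloor 3(n-1)/2 \rfloor$. My goal is to construct an exploration of cost at most $2\ell + r$ edges, exploiting the one extra time step relative to the $L = 2n-3$ minimum. The natural candidate strategy is the ``short-arc round trip then long-arc one-way'' plan: from $s$, go counter-clockwise $\ell$ edges to the left extremity of $m$, retrace those $\ell$ edges back to $s$, then go clockwise $r$ edges to the right extremity of $m$. This visits all $n$ vertices and uses exactly $2\ell + r = \lfloor 3(n-1)/2 \rfloor$ edges; its unobstructed duration is also $2\ell + r$ time steps, so $L \geq 2n - 2$ leaves a slack of at least $r$ time steps for inserted waits.

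I would first handle the tight adversary scenario, where the antipodal edge $m$ is absent whenever the agent would traverse it: the natural plan avoids $m$ entirely and succeeds without any inserted wait, yielding an exploration of exactly $\lfloor 3(n-1)/2 \rfloor$ edges. For other adversary patterns, I would show that the natural plan can be scheduled greedily with at most $r$ inserted waits. The argument uses the at-most-one-missing-edge-per-snapshot constraint: if a scheduled edge $e$ is absent for some stretch of time, then every other cycle edge (including the first edge in the opposite direction from $s$ and every edge of the alternative plan) is present during that stretch, which limits how long the adversary can stall the agent on the natural plan before an alternative of equal cost becomes available.

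If the natural plan cannot be scheduled within $L$ time steps, I would fall back to an alternative using Lemma \ref{lemma:l3}: for each $t \leq L - n + 2$ there exist vertices $s_h(t)$ and $s_a(t)$ from which an uninterrupted clockwise (resp.\ counter-clockwise) traversal of $n - 1$ edges succeeds. The alternative plan is to identify a time $t^* \leq \ell + 1$ together with a feasible preamble walk from $s$ of at most $\ell$ edges ending at $s_h(t^*)$ or $s_a(t^*)$, then follow the uninterrupted $(n-1)$-edge traversal, for a total cost of $\ell + (n-1) = \lfloor 3(n-1)/2 \rfloor$. The main obstacle will be proving that such a $t^*$ always exists: this would follow from a pigeonhole-style argument over the $2n$ candidates $\{s_h(t), s_a(t) : t \in [1, n]\}$, combined with the one-missing-edge-per-snapshot constraint to guarantee feasibility of the preamble. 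The subcase where $n$ is even is most delicate because the unique antipodal vertex $v_{n/2}$ sits at distance $r > \ell$ from $s$ and cannot serve as a preamble target, so I would need to rule out the pathological scenario in which every candidate $s_h(t^*)$ and $s_a(t^*)$ coincides with $v_{n/2}$.
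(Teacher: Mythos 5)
Your proposal has genuine gaps at exactly the points where the difficulty of the lemma lies; both of your key steps are deferred rather than proved, and one of the intuitions driving them is incorrect.

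First, the claim that the ``short-arc round trip then long-arc one-way'' plan can be scheduled greedily ``with at most $r$ inserted waits'' does not hold. Connectivity of each snapshot restricts the adversary to removing at most one cycle edge \emph{per time step}, but it does not restrict for how many consecutive time steps the \emph{same} edge is removed: the edge incident to $s$ in the counter-clockwise direction can be absent for the entire lifetime, stalling your natural plan at its very first step. The presence of all other edges during such a stretch does not help an agent committed to traversing the missing edge, so the slack of $r$ time steps cannot absorb the delay. This means the fallback is not an exceptional case but the heart of the proof, and your fallback is itself only a plan: the existence of a time $t^\ast \leq \ell+1$ with $s_h(t^\ast)$ or $s_a(t^\ast)$ reachable from $s$ by a \emph{feasible} preamble of at most $\ell$ edges is asserted via an unspecified pigeonhole argument. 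Even if some $s_h(t^\ast)$ lies within distance $\ell$ of $s$ on the cycle, the short arc toward it may be blocked throughout $[1,t^\ast]$, forcing the preamble the long way around and destroying the edge bound; nothing in your sketch rules this out.

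For comparison, the paper's proof avoids both issues by fixing $t=n$, using the reachability lemma (Lemma \ref{lemma:l1}) to guarantee that $s_h(n)$ and $s_a(n)$ can each be reached by time $n$ (possibly via the long arc, costing up to $n-1$ edges), and then doing a case analysis on the position of $s_h(n)$ relative to the antipodal edge $m$ and on the direction of the reaching journey. The only bad case for the clockwise algorithm (reaching $s_h(n)$ counter-clockwise when $s_h(n)$ lies on the far side of $m$) is shown to \emph{imply} that $s_a(n)$ is reachable counter-clockwise within the first $n-1$ steps, so the counter-clockwise algorithm then explores with only $n-1$ edges. That implication between the failure of one algorithm and the success of the other is the idea your proposal is missing; without it (or a worked-out substitute for your $t^\ast$ claim), the proof is incomplete.
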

\begin{proof}

By Lemma \ref{lemma:l3}, we know that at each time step $t$, there exist two vertices $s_h(t)$ and $s_a(t)$ of the $n$-vertices cycle such that if the agent is at the vertex $s_h(t)$,respectively $s_a(t)$, at the time step $t$ and always moves clockwise, resp. counter-clockwise, the agent traverses one edge per time step for $n-1$ time steps, thus visiting all the vertices in the cycle in $n-1$ time steps. We can therefore construct an exploration of the cycle in $2n-2$ time steps as follows: the agent traverses the journey from the starting vertex $s$ to $s_h(n)$ (or $s_a(n)$) in $n-1$ time steps according to Lemma \ref{lemma:l1} and then visits all the vertices of the cycle in $n-1$ time steps. In the rest of the proof we use the following notation, $s_h = s_h(n)$ and $s_a = s_a(n)$.
We consider two algorithms of exploration for the cycle:\\
\noindent\textbf{Algorithm 1:}
\begin{enumerate}
    \item The agent is on $s$ at timestesp $1$.
    \item The agent reaches $s_h$ in $n-1$ time steps going either clockwise or counter-clockwise.
    \item The agent moves always clockwise from the vertex $s_h$ until all the vertices are visited (the agent traverses at most $n-1$ edges). The exploration takes at most $n-1+n-1=2n-2$ time steps and the last edge is traversed at most at the time step $L$.
\end{enumerate}
\textbf{Algorithm 2:}
\begin{enumerate}
    \item The agent is on $s$ at time step $1$.
    \item The agent reaches $s_a$ in $n-1$ time steps going either clockwise or counter-clockwise (if both journeys take $n-1$ time steps, the counter-clockwise journey is chosen).
    \item The agent moves always counter-clockwise from the vertex $s_a$ until all the vertices are visited.(the agent traverses at most $n-1$ edges). The exploration takes at most $n-1+n-1=2n-2$ time steps and the last edge is traversed at most at the time step $L$..
\end{enumerate}

We then present an algorithm for exploring the cycle in at most $2n-2$ time steps and traversing at most $\lfloor \frac{3}{2}(n-1) \rfloor$ edges.
This algorithm consists of returning the exploration of the algorithm $1$ if it traverses at most $\lfloor \frac{3}{2}(n-1) \rfloor$ edges. If this is not the case, we return the solution of the algorithm $2$.
We prove that this algorithm always returns an exploration traversing at most $\lfloor \frac{3}{2}(n-1) \rfloor$ edges in $2n-2$ time steps.
First we study the result of the algorithm $1$: this algorithm returns a solution that can be divided into three cases (shown in figures \ref{fig:1_1},\ref{fig:1_2},\ref{fig:1_3}).\\
\begin{itemize}
\item{Case 1.1 : } In the first case shown in figure \ref{fig:1_1}, the journey from $s$ to the vertex $s_h$ is clockwise, so we can see that the total number of edges traversed is $n-1$. Our algorithm will return this solution ($n-1 < \lfloor \frac{3}{2}(n-1) \rfloor$).

\item{Case 1.2 : } In the second case shown in figure \ref{fig:1_2}, the journey from $s$ to $s_h$ is counter-clockwise, and $s_h$ is on the left size of $m$. Therefore the journey from $s$ to $s_h$ traverses at most $\lfloor \frac{n-1}{2} \rfloor$ edges, so the total number of edges traversed is at most $\lfloor \frac{3}{2}(n-1) \rfloor$ edges. Also our algorithm will return this solution.

\begin{figure}
    \begin{minipage}{0.45\textwidth}
        \centering
        \includegraphics[width=0.7\textwidth]{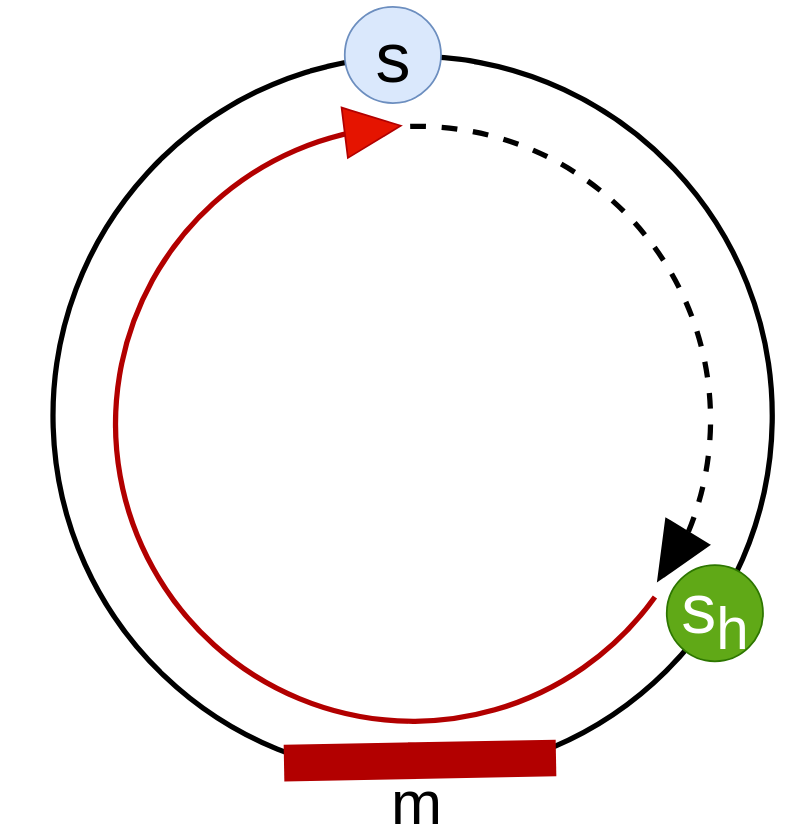} 
        \caption{The journey from $s$ to $s_h$ (dashed line) is clockwise from s.}
        \label{fig:1_1}
    \end{minipage}\hfill
    \begin{minipage}{0.45\textwidth}
        \centering
        \includegraphics[width=0.71\textwidth]{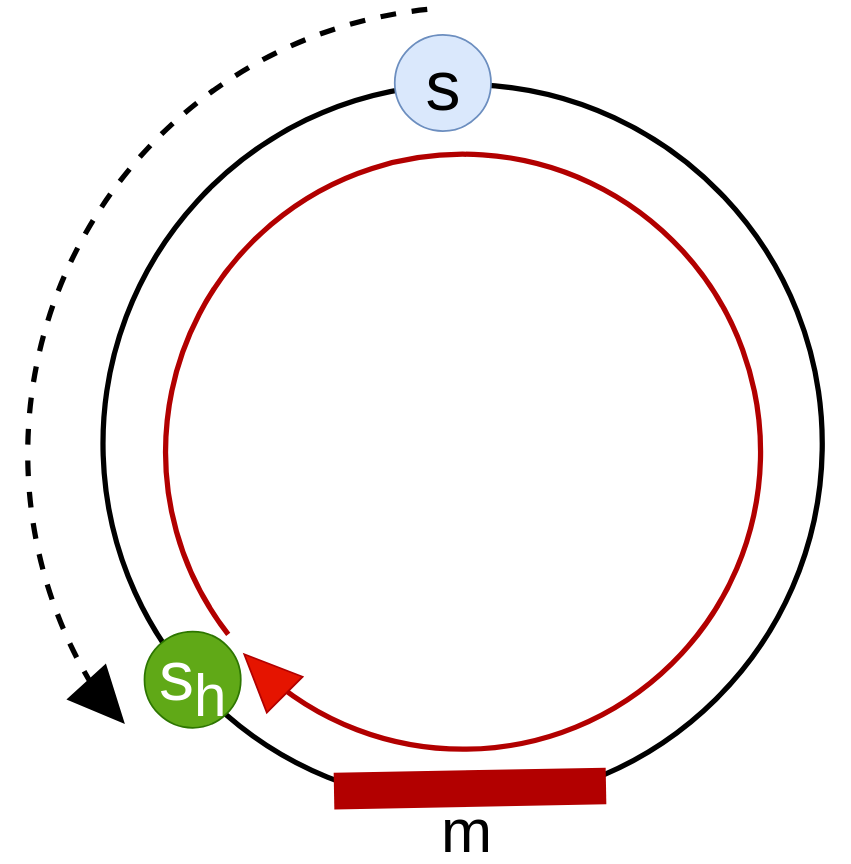} 
        \caption{The vertex $s_h$ is on the left side of $m$ and the journey from $s$ to $s_h$ (dashed line) is counter-clockwise from s.}
        \label{fig:1_2}
    \end{minipage}
\end{figure}

\item{Case 1.3 : } In the third case shown in figure \ref{fig:1_3}, the journey from $s$ to the vertex $s_h$ is counter-clockwise, and $s_h$ is on the \textbf{right} side of the edge $m$. So the journey from $s$ to $s_h$ traverses strictly more than $\lfloor \frac{n-1}{2} \rfloor$ edges. This gives us a total number of edges traversed that is strictly greater than $\lfloor \frac{3}{2}(n-1) \rfloor$ edges. In this case, the solution of algorithm $2$ will be returned.
\begin{figure}[!h]
\centering
\includegraphics[width=0.3\textwidth]{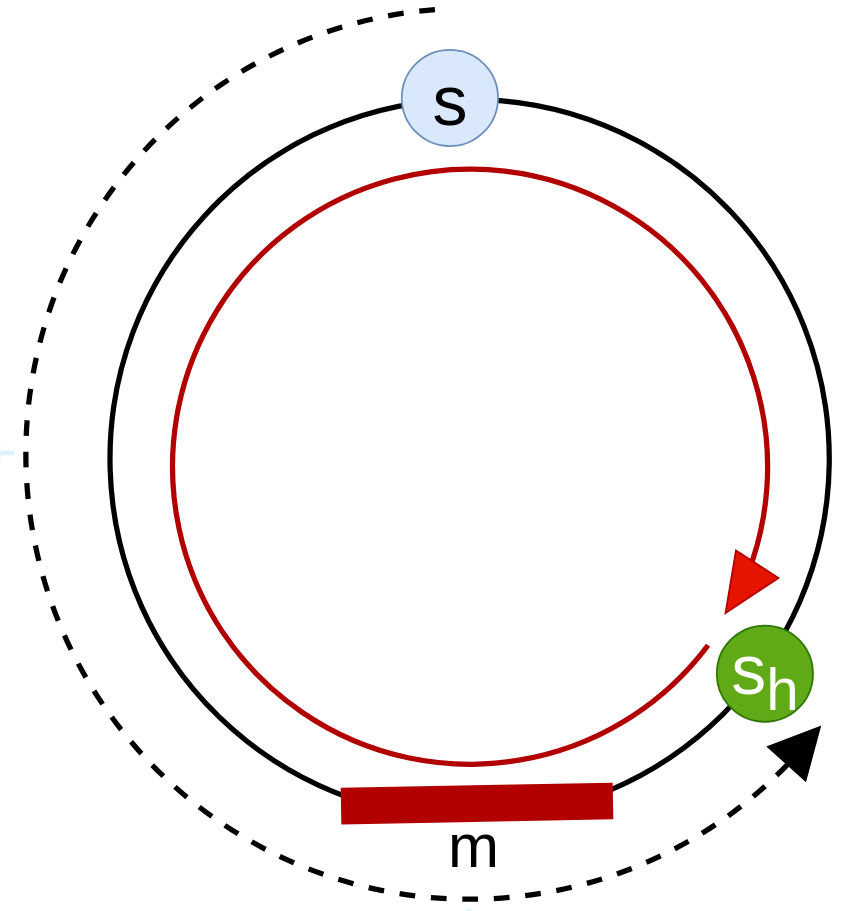}
\caption{The vertex $s_h$ is on the right side of $m$ and the journey from $s$ to $s_h$ (dashed line) is counter-clockwise from $s$.}
\label{fig:1_3}
\end{figure}
\end{itemize}
In the third case, the result of the algorithm $2$ is returned, otherwise the result of the algorithm $1$ is returned. The result returned by the algorithm $2$ can be divided into the same three distinct cases as algorithm $1$.
And similarly to the algorithm $1$, there is only one of the three cases that can return an exploration traversing more than $\lfloor \frac{3}{2}(n-1) \rfloor$ edges, which is represented in figure \ref{fig:2_3b}, we call this case 2.3. We prove that this case induces the existence of an exploration traversing less than  $\lfloor \frac{3}{2}(n-1) \rfloor$ edges.
The case 2.3 is when $s_a$ is on the left side of $m$ or $s_a$ is the right extremity of $m$ and the agent traverses a counter-clockwise journey from $s$ to $s_a$. Then the journey from $s$ to $s_a$ traverses at least $\lceil \frac{n-1}{2} \rceil$ edges, so the total number of edges traversed is greater than or equal to $\lfloor \frac{3}{2}(n-1) \rfloor$ (we have $\lceil \frac{n-1}{2} \rceil+n-1\geq\lfloor \frac{3}{2}(n-1) \rfloor$).

\begin{figure}[!h]
    \begin{minipage}{0.45\textwidth}
        \includegraphics[scale=0.15]{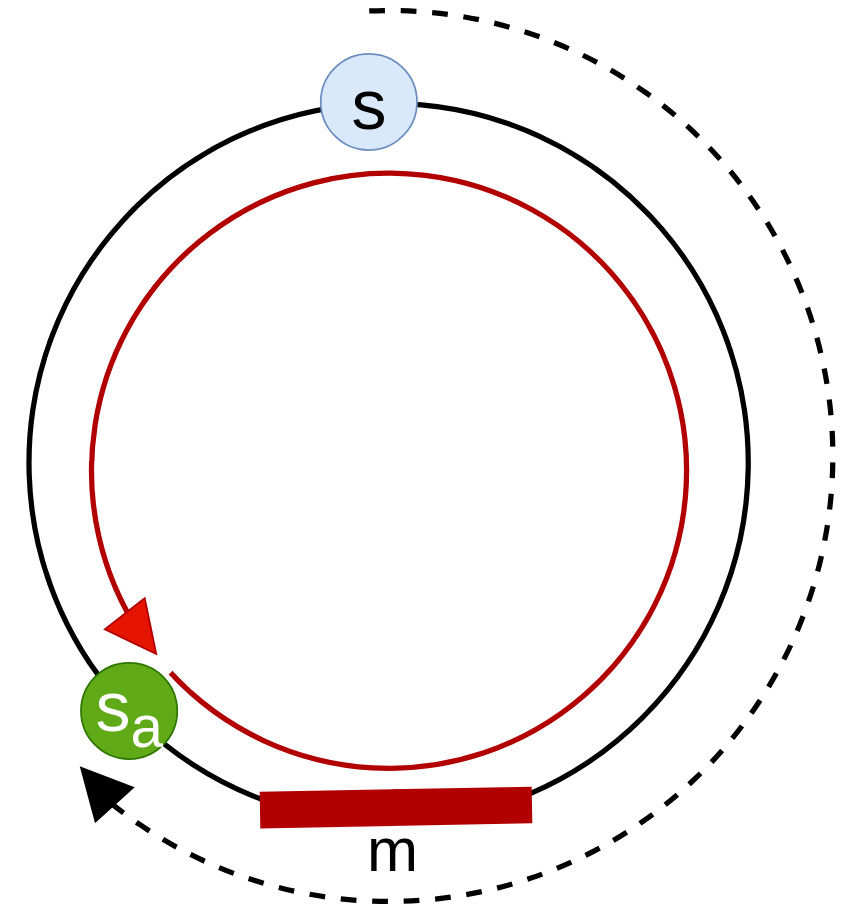} 
        \caption{This is the case 2.3. The vertex $s_a$ is on the left side of $m$ and there is a journey from $s$ to $s_a$ that goes clockwise.}
        \label{fig:2_3b}
    \end{minipage}
    \begin{minipage}{0.45\textwidth}
       \includegraphics[scale=0.15]{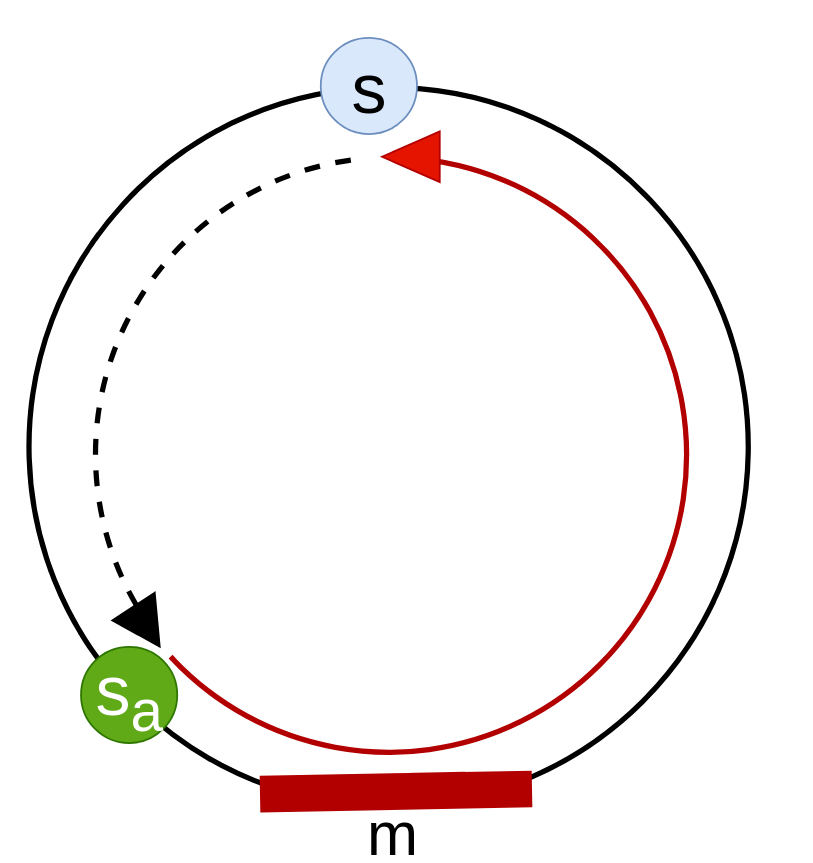} 
        \caption{The vertex $s_a$ is on the left side of $m$ and the journey from $s$ to $s_a$ goes counter-clockwise from $s$. If we are in the case $2.3$, such an exploration exist.}
        \label{fig:2_3a}
    \end{minipage}\hfill
\end{figure}

Recall that in the case $1.3$ the result of algorithm $2$ is returned and in the case $1.3$ there is a journey in $n-1$ time steps going counter-clockwise from $s$ to the vertex $s_h$ on the right side of $m$ ($s_h$ is either the right extremity of $m$ or on the right side of the extremity). Also this journey visits all the vertices between $s$ and the right extremity of $m$ (by going counter-clockwise) in less than $n-1$ time steps, and by assumption $s_a$ is one of these vertices because it is on the left side of $m$ or the right extremity of $m$.

So by assumption there is a counter-clockwise journey from $s$ to $s_a$ in at most $n-1$ time steps. And after arriving at $s_a$ in the counter-clockwise direction, the exploration without blocking also takes place in the counter-clockwise direction until the exploration is complete. Therefore the agent traverses a total of $n-1$ edges for the exploration as shown in figure \ref{fig:2_3a}. Our algorithm will then return this solution that traverses $n-1<\lfloor \frac{3}{2}(n-1) \rfloor$ edges in $2n-2$ time steps.

So our algorithm always returns an exploration of $\mathcal{C}$ that takes at most $2n-2$ time steps and traverses at most $\lfloor \frac{3}{2}(n-1) \rfloor$ edges.
\end{proof}
\begin{lemma}
    For every integer $n\geq 3$, there exists a temporal cycle $\mathcal{C}$ with $L\geq 2n-2$ such that the number of edges to traverse for every exploration is at least $\lfloor \frac{3}{2}(n-1) \rfloor$.
    \label{lemma:l2_cycle}
\end{lemma}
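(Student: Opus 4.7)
The plan is to match the upper bound of Lemma~\ref{lemma:l1_cycle} by engineering a temporal cycle in which one particular edge is permanently absent, reducing any exploration to a walk on a static Hamiltonian path. Concretely, I would take $L = 2n-2$ (larger values of $L$ are handled by repeating snapshots) and define every snapshot $G_i$ to be the underlying cycle $C$ with the edge $m$ removed. Since removing a single edge from $C$ leaves a Hamiltonian path, each snapshot is connected, so $\mathcal{C}$ is a valid always-connected temporal cycle.

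The second step is the key observation that since $m$ is never present in $\mathcal{C}$, no strict journey can ever traverse $m$; consequently every exploration of $\mathcal{C}$ is a walk on the static path $P = C \setminus \{m\}$ that begins at $s$ and visits every vertex. By the definition of $m$ given at the start of Section~\ref{subsec:cycle}, the two endpoints of $P$ (the extremities of $m$) lie at distances $a = \lfloor(n-1)/2\rfloor$ and $b = \lceil(n-1)/2\rceil$ from $s$, so $s$ is in the interior of $P$.

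The final and main step is a standard walk-on-a-path lower bound: any walk on $P$ starting at an interior vertex $s$ that visits both endpoints must, after reaching whichever endpoint it visits first, traverse the entire length $n-1$ of $P$ to reach the other; hence it uses at least $\min(a,b) + (n-1) = \lfloor \frac{3}{2}(n-1) \rfloor$ edges. Staying at a vertex during a time step contributes no edge, so this bound transfers verbatim to temporal walks on $\mathcal{C}$. I do not expect any substantive obstacle: the construction is minimal, the connectedness check is immediate, and the only careful part is the case split on which endpoint of $P$ is reached first, which is routine. Together with Lemma~\ref{lemma:l1_cycle} this would give the tight value $\lfloor \frac{3}{2}(n-1) \rfloor$ for the worst-case number of edges when $L \geq 2n-2$.
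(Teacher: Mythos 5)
Your proposal is correct and uses essentially the same construction as the paper: every snapshot is the cycle with the edge $m$ permanently removed, so any exploration is a walk on the resulting Hamiltonian path and must cost at least $\min\left(\lfloor\frac{n-1}{2}\rfloor,\lceil\frac{n-1}{2}\rceil\right)+(n-1)=\lfloor\frac{3}{2}(n-1)\rfloor$ edges. If anything, your explicit walk-on-a-path lower bound (reach one endpoint first, then traverse the full path to the other) is more carefully justified than the paper's proof, which merely exhibits the optimal exploration and asserts it is shortest.
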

\begin{proof}

We construct the following temporal graph: every snapshot of $\mathcal{C}$ is identical and only the edge $m$ (such as defined in the beginning of the part \ref{subsec:cycle}) is constantly absent.\\
We see that the shortest exploration of $\mathcal{C}$ traverses
$\lfloor \frac{n-1}{2} \rfloor+n-1 = \lfloor \frac{3}{2}(n-1) \rfloor$ edges in the same number of time steps by first traversing the edges in the counter-clockwise direction from $s$ to the left extremity of $m$ ($\lfloor \frac{n-1}{2} \rfloor$ edges) and then traversing $n-1$ edges in the clockwise direction to the other extremity of $m$.
\end{proof}
\subsubsection{The particular case of an exploration in $2n-3$ time steps}
\begin{lemma}
    For every integer $n\geq 3$, there exists a temporal cycle $\mathcal{C}$ with $L=2n-3$ such that there is only one possible exploration and this exploration traverses $2n-3$ edges.
    \label{lemma:l3_cycle}
\end{lemma}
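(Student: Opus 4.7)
The plan is to exhibit a specific temporal cycle $\mathcal{C}$ with $L=2n-3$ on $n$ vertices $v_0=s,v_1,\ldots,v_{n-1}$ labeled clockwise, and show that the only exploration has exactly $2n-3$ edge traversals. I would design the snapshots so that at every time step the agent has essentially only one forward move available, and so that any wait at time $t$ leaves insufficient remaining time in $L-t$ steps to complete the exploration in $\mathcal{C}$.

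For the construction, I would choose at each time $t$ a single cycle edge to delete so that each snapshot is a Hamiltonian path (hence connected), arranging the pattern of deletions to force a specific ``bounce'' walk — the natural candidate is $W:\ v_0\to v_1\to\cdots\to v_{n-2}\to v_{n-3}\to\cdots\to v_1\to v_0\to v_{n-1}$, which covers all $n$ vertices in exactly $(n-2)+(n-2)+1=2n-3$ edges. Concretely, when $W$ instructs a move from $v_i$ to $v_{i+1}$ at time $t$, the deleted edge at $t$ should be the other cycle edge incident to $v_i$, so that the agent's current vertex has degree one in $G_t$ and its only options are ``follow the move of $W$'' or ``wait''; analogous choices handle the counter-clockwise leg and the final step. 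Verifying that $W$ is then a valid strict journey is a direct check.

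The crux of the argument, and the main obstacle, is proving uniqueness. Any deviation from the prescribed walk can only come from a wait step (since at each time the forward edge of $W$ is the only one incident to the agent's current vertex in the snapshot). After a wait at some time $t^\star$, the agent is offset from the position $W$ expects and has only $L-t^\star<2n-3$ time steps remaining, and I would argue case by case that this is not enough to finish visiting the remaining vertices: the absent-edge pattern from time $t^\star+1$ onward has already ``rotated past'' the agent's position, so reaching the last vertex $v_{n-1}$ on the opposite side of $v_0$ requires strictly more moves than are available. Ruling out potential shortcuts through cycle edges not used by $W$ — in particular verifying that no available edge allows the agent to reach an unvisited vertex earlier than $W$ does — is the delicate technical part; the argument ultimately leverages the tightness of the $2n-3$ lower bound from Lemma~\ref{lemma:l2}, which prevents any ``sub-exploration'' from fitting into $L-1=2n-4$ time steps.
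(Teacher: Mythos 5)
There is a genuine gap, and in fact the construction you sketch does not work as stated. Your deletion rule removes, at each time $t$, only the cycle edge ``behind'' the position that the intended walk $W$ occupies at time $t$; it does nothing to constrain an agent that has already left the schedule. Concretely, let the agent wait at $v_0$ at time $1$. At time $2$ the deleted edge is $\{v_0,v_1\}$, so the edge $\{v_0,v_{n-1}\}$ is present and the agent moves to $v_{n-1}$; from there it runs counter-clockwise $v_{n-1}\to v_{n-2}\to\cdots\to v_1$. The edge $\{v_j,v_{j+1}\}$ is needed at time $n+1-j$ but is deleted only at time $n-1+(n-2-j)=2n-3-j$, so (waiting one extra step in the small cases where these collide) the agent completes an exploration in roughly $n$ time steps with only $n-1$ edge traversals. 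This simultaneously violates uniqueness and the claimed lower bound of $2n-3$ edges, so the ``delicate technical part'' you defer is not merely unfinished --- it is false for this instance. The underlying issue is that forcing uniqueness cannot be done by blocking one edge per snapshot at the on-schedule position; you must also close off the entire alternative route around the cycle for agents that deviate at any earlier time.

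The paper's construction avoids this by using only two absent edges, each over a long contiguous interval: $\{s,v_1\}$ is absent during $[1,n-2]$ and $\{v_1,v_2\}$ is absent during $[n-1,L]$, where $v_1,v_2$ are the first two vertices clockwise from $s$. This traps $v_1$: it can never be entered from $v_2$ after time $n-2$, and it can only be entered from $s$ at time $\geq n-1$. A short two-case analysis (visit $v_1$ before $v_2$, or after) shows the first option overshoots the lifetime, and the second forces the counter-clockwise sweep to $v_2$ ($n-2$ edges) followed by a full clockwise sweep back to $v_1$ ($n-1$ edges), for exactly $2n-3$ edges in $2n-3$ time steps. If you want to salvage your approach, you would need interval-based (not single-snapshot) deletions of this kind; the per-step Hamiltonian-path idea is not enough.
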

\begin{proof}

We construct the temporal cycle $\mathcal{C}$ with a lifetime $L=2n-3$ as shown in figure \ref{fig:borne_inf_2}. We have $v_1$ the vertex adjacent to $s$ in the clockwise direction and $v_2$ the vertex adjacent to $v_1$ in the clockwise direction. The edge $\{s,v_1\}$ is absent during the time interval $[1,n-2]$, and the edge $\{v_1,v_2\}$ is absent during the interval from time step $[n-1,L]$.
We prove that there is only one possible journey exploring this cycle in $2n-3$ time steps and that it traverses $2n-3$ edges.
\begin{figure}[!h]
\centering
\includegraphics[scale=0.21]{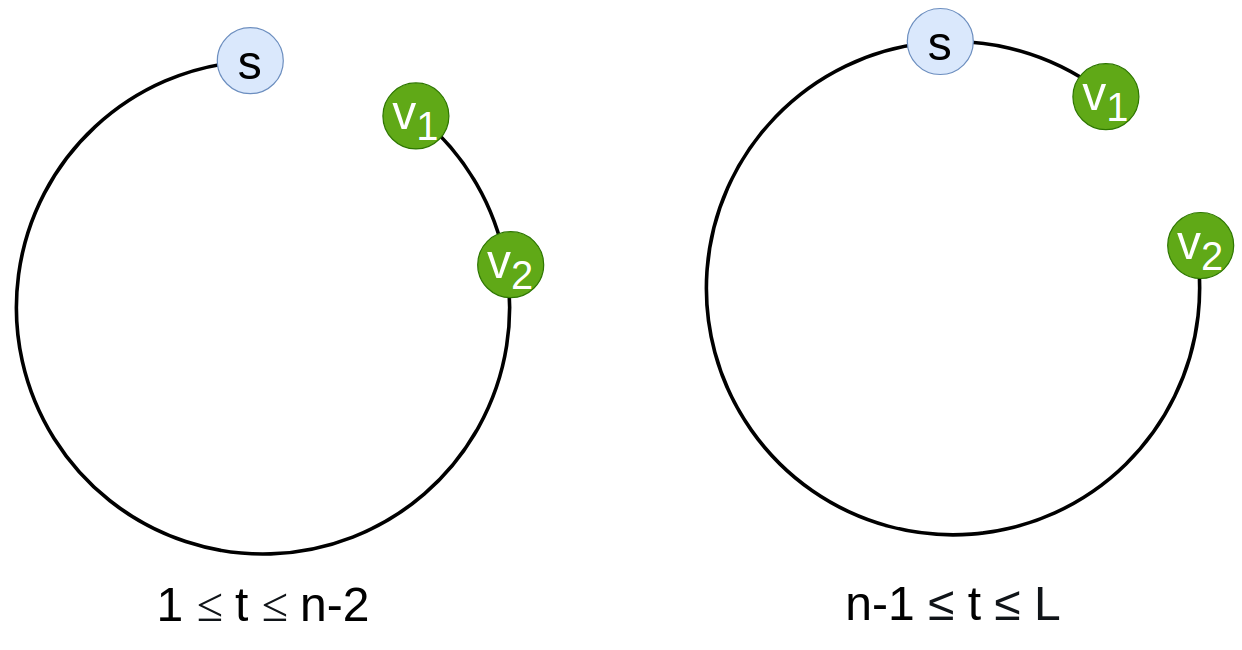}
\caption{The number of edges to traverse to explore this graph is $2n-3$ if $L=2n-2$.}
\label{fig:borne_inf_2}
\end{figure}
These are the two possible cases :
\begin{itemize}
    \item \textbf{The agent visits $v_1$ before $v_2$ :} The agent traverses the edge $\{s,v_1\}$ at the time step $n-1$ and is on $v_1$ at $n$. The agent must then change direction, as the edge $\{v_1,v_2\}$ is absent from time step $n-1$ to $L$, and visits the rest of the vertices by moving in the counter-clockwise direction from $v_1$ during $n-1$ time steps. The agent will have traversed the last edge of its exploration at $2n-2$, by traversing $n$ edges. This solution is invalid because we have to visit the temporal graph before $L$ (i.e in $2n-3$ time steps).
    
    \item \textbf{The agent visits $v_2$ before $v_1$ :} The agent starts by moving counter-clockwise until it reaches $v_2$ at time step $n-1$. However, the edge $\{v_1,v_2\}$ is absent from time step $n-1$ to $L$, so the agent has to go clockwise and complete the exploration by reaching $v_1$ in $n-1$ time steps from vertex $v_2$ traversing the last edge $\{s,v_1\}$ at the time step $2n-3$ because $n-1+n-2=2n-3=L$.
    The agent will have completed the exploration after $2n-3$ time steps by traversing $2n-3$ edges, so this is the only valid solution.
\end{itemize}
We have proven that the only possible exploration of $\mathcal{C}$ in $2n-3$ time steps must traverse $2n-3$ edges, i.e. we have a lower bound of $2n-3$ for the number of edges to traverse for the case where $L=2n-3$.
\end{proof}
Note that the temporal graph presented in the proof of Lemma \ref{lemma:l3_cycle} has already been presented by Ilcinkas et al.\cite{ilcinkas_ring} for the study of ETEXP in the cycle.\\
An overview of the results described by lemmas \ref{lemma:l1_cycle},\ref{lemma:l2_cycle} and \ref{lemma:l3_cycle} are presented in figure \ref{fig:courbe1}. Since Ilcinkas et al.\cite{ilcinkas_ring} proved that there is always an exploration if $L\geq 2n-3$ time steps, we start the value of the lifetime at $2n-3$ in figure \ref{fig:courbe1}. So the bounds are tight for the number of edges traversed in STEXP if $L\geq 2n-3$.
\begin{figure}[!h]
\centering
\includegraphics[scale=0.24]{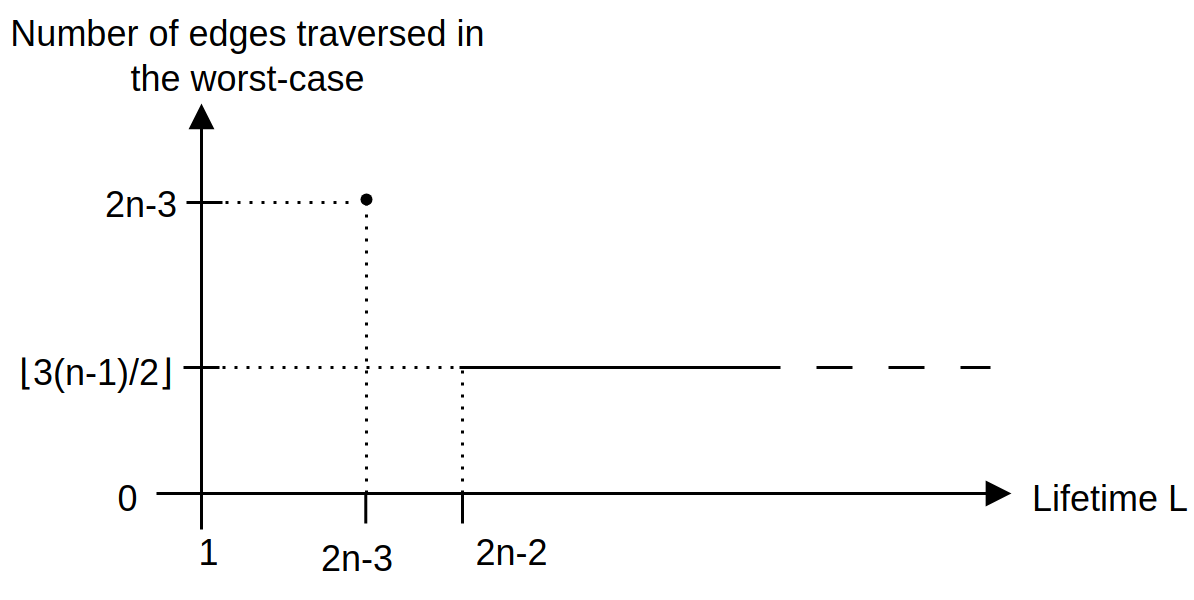}
\caption{A representation of the number of edges traversed in the worst case as a function of the lifetime $L$ for the cycle.}
\label{fig:courbe1}
\end{figure}
\section{Conclusion}
\label{sec:conclusion}
In this paper we have proven that, for a constantly connected temporal graph, there is an exploration in $O(n^{3.5})$ time steps traversing $O(n^{1.5})$ edges. Morever we have presented results on more restricted classes of constantly connected temporal graphs by proving that we have an exploration in $O(kn^2)$ time steps traversing $O(kn)$ edges if every snapshot have a diameter bounded by $k$. There exists a family of temporal graphs with snapshots of bounded diameter for which all explorations takes $\Omega(n^2)$ time steps and traverses $\Omega(n)$ edges, and the later result implies that for every temporal graph of bounded diameter, there is an exploration that takes $O(n^2)$ time steps and traverses $O(n)$ edges. The other restricted class studied is the case where the underlying graph is a cycle. We have proven this interesting result : in the STEXP, the number of edges traversed  in the worst case is different if $L=2n-3$ (i.e. the exploration takes at most $2n-3$ time steps) or $L>2n-3$.
\\Finally, we present the following conjecture : Any constantly connected temporal graph admits an exploration in $\Theta(n^2)$ time steps traversing $\Theta(n)$ edges. Proving that conjecture, or at least showing that the number of traversing edges is $o(n^2)$ when there are $\Theta(n^2)$ time steps, is the main perspective of the present paper. 

\bibliographystyle{plain} 
\bibliography{lipics-v2021-sample-article.bib} 

\newpage
\appendix

\section{Proof of lemmas}
\subsection{Lemma \ref{lemma:l4}}
\label{proof:p1}
\begin{lemma*}
   Let $\mathcal{G}$ be a temporal graph of $n$ vertices and $k$ such that $1\leq k< n$, the lifetime of the graph is $L\geq kn$. \\ Let $u,v$ be two distinct vertices of $V$, such that there exist $kn$ distinct paths (i.e on distinct snapshots) with a number of edges less than or equal to $k$ connecting $u$ and $v$, each of these paths $u\leftrightarrow_k v$ is associated with exactly one time step. Then there exists a journey $u \rightsquigarrow_k v$ in $\mathcal{G}$ using only these time steps.
\end{lemma*}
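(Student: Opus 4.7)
My plan is to prove the lemma by induction on $k$. In the base case $k = 1$, the hypothesis provides $n$ distinct snapshots in each of which $u$ and $v$ are joined by a path of length at most $1$; since $u \neq v$ this must be the edge $(u,v)$ itself, so traversing it at any one of these times yields a strict journey of length $1 \leq k$.

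For the inductive step, assume the claim for $k-1$, and suppose we are given $kn$ paths $P_1, \dots, P_{kn}$ of length at most $k$ at times $t_1 < \dots < t_{kn}$. If some $P_i$ has length $1$, we are done immediately by traversing the edge $(u,v)$ at $t_i$. Otherwise every $P_i$ has a well-defined second vertex $w_i$, which is a neighbor of $u$ in the underlying graph. By pigeonhole over the at most $n-1$ neighbors of $u$, some vertex $w^*$ occurs as the second vertex of at least $\lceil kn/(n-1) \rceil \geq k+1$ of the paths. I would let $t_{i^*}$ be the earliest such time and use the edge $(u, w^*)$ of $P_{i^*}$ at $t_{i^*}$ to move the agent to $w^*$. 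It then remains to build a strict journey of length at most $k-1$ from $w^*$ to $v$ using only time steps strictly greater than $t_{i^*}$.

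The natural continuation is to invoke the inductive hypothesis on the restricted temporal graph $\mathcal{G}_{[t_{i^*}+1,L]}$, on the pair $(w^*, v)$, and on the parameter $k-1$, using as the collection of paths all later snapshots $G_{t_j}$ in which $w^*$ lies on $P_j$: each such snapshot contains a path $w^* \leftrightarrow_{k-1} v$, namely the suffix of $P_j$ from $w^*$ to $v$.

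The main obstacle is producing enough such later paths to apply the hypothesis. Direct pigeonhole only guarantees $\geq k$ later snapshots with $w^*$ as the \emph{second} vertex, whereas the inductive hypothesis demands $(k-1)n$ paths from $w^*$ to $v$. Closing this gap is what I expect to be the hardest step, and I anticipate it requires either a more global selection of $w^*$ (e.g.\ the vertex maximizing total appearances across all $P_i$ rather than just as a second vertex), combined with a refined double counting that also captures later paths where $w^*$ appears at an internal position and thus still yields a suffix $w^* \to v$ of length $\leq k-1$, or else a reformulation of the inductive invariant that is compatible with the $\Theta(k)$ paths that a single pigeonhole step directly provides.
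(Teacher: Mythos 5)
Your induction does not close, and the gap you flag at the end is fatal rather than merely technical. Quantitatively: one pigeonhole step hands you roughly $kn/(n-1)\approx k$ later snapshots containing a path $w^*\leftrightarrow_{k-1}v$, whereas the inductive hypothesis for the pair $(w^*,v)$ and parameter $k-1$ demands $(k-1)n$ of them. If you try to repair this by strengthening the invariant to ``$f(k)$ paths suffice,'' the recurrence you are forced into is $f(k)\geq (n-1)\bigl(f(k-1)+1\bigr)$, i.e.\ $f(k)=n^{\Theta(k)}$, which is uselessly far from the $kn$ of the statement (and the later lemmas genuinely need $kn$, since they only have a lifetime of about $n^{2.5}$ to spend). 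The alternative repairs you sketch do not obviously help either: choosing $w^*$ by total appearances across all $kn$ paths yields on the order of $k^2$ suffixes $w^*\to v$, still nowhere near $(k-1)n$, and you would additionally need to reach such a $w^*$ from $u$ cheaply, which is not guaranteed once $w^*$ sits deep inside the paths.

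The paper avoids induction on $k$ entirely and runs an amortized, Bellman--Ford-style relaxation argument. Maintain for every vertex $w$ a label $l_w^t$, the least length of a strict journey from $u$ to $w$ whose last edge is traversed by time $t$. Consider any snapshot $t_c$ containing a path $u\leftrightarrow_k v$. If no journey $u\rightsquigarrow_k v$ exists by $t_c$, walk along that path from $u$ until you first cross an edge from a vertex $a$ with $l_a^{t_c-1}\leq d_a$ (its distance from $u$ along the path) to a vertex $b$ with $l_b^{t_c-1}>d_b$; relaxing the edge $(a,b)$ at time $t_c$ strictly improves $b$'s label, either dropping it from above $k$ to at most $k$ (which can happen at most $n-1$ times in total, once per vertex) or decreasing a label already at most $k$ by at least $1$ (at most $(k-1)(n-1)$ times in total). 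Hence after $kn$ snapshots each containing such a path, at least one of them must fall into the ``journey completed'' case. If you want to salvage your write-up, this potential argument is the missing idea; the induction skeleton should be discarded.
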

\begin{proof}
     In this proof, only the time steps where there is a path $u \leftrightarrow_k v$ in $\mathcal{G}$ are considered.
    Note that the case where $k=1$ is trivial, as there must be a single $1$-edge path between the vertices $u$ and $v$ to obtain a journey $u \rightsquigarrow_1 v$, and $1<1*n$ (as $u$ and $v$ are two distinct vertices, so $|V|\geq 2$). In the rest of the proof, we assume that $k>1$.
    To each vertex $w$ a label $l_{w}^t$ is associated, which is the minimal length of a journey from $u$ to $w$ whose last edge is traversed at $t'\leq t$; when there is no possible confusion, we denote this label $l_w$. We have $l_{u}^1=0$ and $\forall{w\neq u}, l_{w}^1=\infty$.
    
    We consider the snapshot at the time step $t_c, 1<t_c<kn$: by assumption it contains \emph{a path} $u \leftrightarrow_k v$. We prove that the snapshot is in one of the following three cases.
      \begin{itemize}
        \item{Case 1:} There exists a journey $u\rightsquigarrow_k v$ where the last edge of the journey is traversed at most in $t_c$. That is to say that $v$ has a label $l_{v}^{t_c}\leq k$.
        \item{Case 2:} There exists a vertex $w\in V$ such that we have $l_w^{t_c-1}=x_1>k$ and $l_w^{t_c}=x_2\leq k$.
        \item{Case 3:} There exists a vertex $w\in V$ such that we have $l_w^{t_c-1}=x_1\leq k$ and $l_w^{t_c}\leq x_1-1$.
    \end{itemize}
     If the snapshot is in case $1$, then there is a journey $u\rightsquigarrow_k v$, so we assume in the rest of the proof that there is no journey $u\rightsquigarrow_k v$ at the time step $t_c$, i.e. $v$ has a label $l_{v}^{t_c}> k$.

    Considering the vertices on the path $u\leftrightarrow_k v$ at $t_c$, these vertices are partitioned into two sets $A$ and $B$. We assign each vertex $w$ of the path to a label $d_w^{t_c}$, which is the distance to $u$ in the path (so $u$ has the labels $l_{u}^{t}=0$ and $d_{u}^{t}=0$ at each time step $t$). A vertex $w$ of the path is in the set $A$ at $t_c$ if $l_w^{t_c-1}\leq d_w^{t_c}$, otherwise $w$ is in the set $B$.

\begin{figure}[!h]
    \centering
    \includegraphics[scale=0.22]{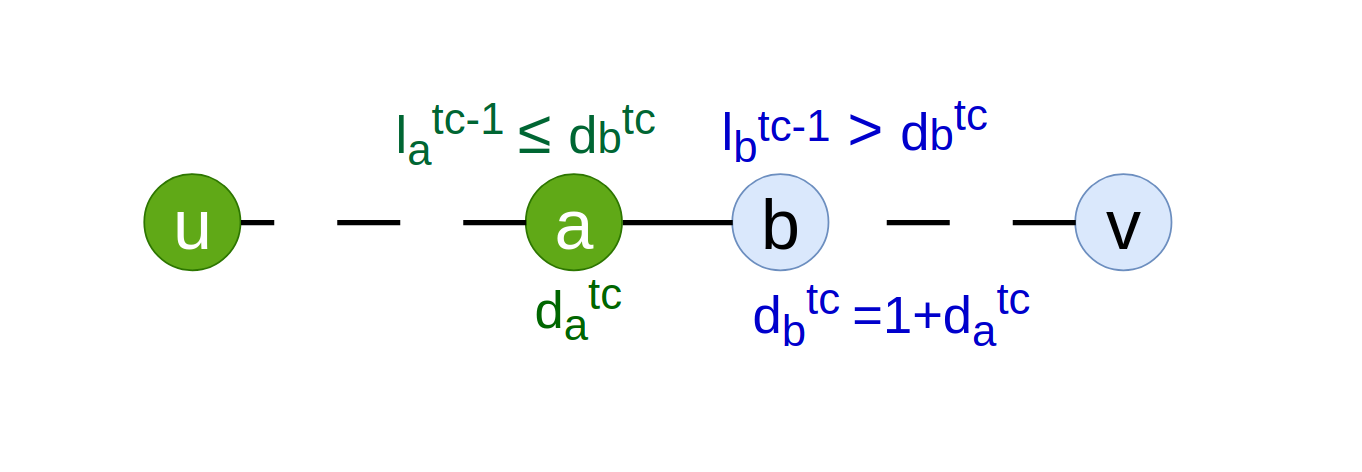}
    \caption{A representation of the path of at most $k$ edges at the time step $t_c$ in the case 2 or 3: vertices in A are in green and vertices in B are in blue.}
    \label{fig:img1}
\end{figure}

By assumption, we are not in the case 1 at time step $t_c>1$ so $l_v^{t_c}> k$ which means $l_v^{t_c-1}> k$ (the length of the shortest journey from $u$ to $v$ can only decrease with time). 
Then at $t_c$, we have $v\in B$ because $d_{v}^{t_c}\leq k< l_{v}^{t_c-1}$ so $d_{v}^{t_c}< l_{v}^{t_c-1}$. Moreover we have $u\in A$ because $l_u^{t_c-1}=0$ and $d_u^{t_c}=0$, so $d_u^{t_c}\geq l_u^{t_c-1}$.

We follow the path $u\leftrightarrow_k v$ of the time step $t_c$ from $u$, vertex by vertex, until we find the first vertex of set $A$, denoted $a$, which is connected by an edge to a vertex of set $B$, denoted $b$, as shown in figure \ref{fig:img1}. The vertices $a$ and $b$ must exist, as we have seen that $u\in A$ and $v\in B$ at $t_c$. So we have $d_b^{t_c} = d_a^{t_c} +1$, and $l_b^{t_c-1}> d_b$ because $b\in B$ and $l_a^{t_c-1}\leq d_a$ because $a\in A$, i.e. $l_b^{t_c-1}>d_b=d_a+1\geq l_a^{t_c-1}+1$, so $l_b^{t_c-1}>l_a^{t_c-1}+1$. Therefore we associate $b$ to the label $l_b^{t_c}=l_a^{t_c-1}+1$, this corresponds to the journey from $u$ to $a$ of $l_a$ edges at the time step $t_c-1$, to which we add the edge $(a,b)$ traversed at the time step $t_c$.
\newline
We are in one of the following two cases:
\begin{itemize}
    \item Either $l_b^{t_c-1}>k$, and we have the label $l_b^{t_c}=l_a^{t_c-1}+1\leq d_a^{t_c}+1 \leq k$ because $d_a^{t_c}\leq k-1$ according to its position in the path $u\leftrightarrow_k v$. So we are in case $2$ because the labels of $b$ are $l_b^{t_c-1}>k$ and $l_b^{t_c}\leq k$.
    \item  Either $l_b^{t_c-1}\leq k$, and we know that $l_b^{t_c-1}>l_a^{t_c-1}+1=l_b^{t_c}$, so we have $l_b^{t_c}<l_b^{t_c-1}$, i.e. $l_b^{t_c}\leq l_b^{t_c-1}-1$. Then we are in the case 3, since we have $l_b^{t_c-1}=x_1\leq k$ and $l_b^{t_c}\leq x_1-1$.
\end{itemize}
    
So as long as we're not in the case 1 (i.e. there's no journey $u\rightsquigarrow_k v$ at $t_c$), then we're necessarily either in case 2 or case 3. However, we can be in case 2 at most $n-1$ times, because there are exactly $n-1$ vertices different from $u$, and a vertex $w\neq u$ cannot have more than one time step $t>1$ such that $l_w^{t-1}>k$ and $l_w^t\leq k$, because the label $l_w$ does not increase with time. And we can be in case 3 at most $(k-1)(n-1)$ times because if the label of the vertex $w\neq u$ at time step $t$ is $l_w^t\leq k$, since we have $l_w^t\geq 1$, $l_w$ can't decrease by at least $1$ more than $\lfloor k-1\rfloor\leq k-1$ times. So Case 3 can only occur at most $(n-1)(k-1)$ times. In total, there can only be $(k-1)(n-1) + (n-1)= kn-k < kn-1$ cases 2 or 3, and by assumption we have $kn$ distinct paths $u\leftrightarrow_k v$ in $\mathcal{G}$. So there is at least one time step $t$ where a path $u\leftrightarrow_k v$ exists in which we are in case 1, i.e. there is a journey $u\rightsquigarrow_k v$ where the last edge traversed in the path is traversed at this time step $t$.
So if there are $kn$ distinct paths with at most $k$ edges connecting $u$ and $v$ in $\mathcal{G}$ with $L\geq kn$, then there exists a journey $u \rightsquigarrow_k v$ (and $v \rightsquigarrow_k u$).
\end{proof}
\subsection{Lemma \ref{lemma:l5}}
\label{proof:p2}
\begin{lemma*}
    Let $G=(V,E)$ be a connected static graph of $n$ vertices and $k,\mbox{ such that } 1< k<n$. Let $Z$ be a subset of vertices of maximal size such that $\nexists{(u,v)\in Z},u \leftrightarrow_k v$, then we have $|Z|<2n/k$.
\end{lemma*}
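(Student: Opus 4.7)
The natural approach is a packing argument with metric balls. Since no two distinct vertices of $Z$ are connected by a path of at most $k$ edges in $G$, every pair of distinct $z_1,z_2\in Z$ satisfies $d_G(z_1,z_2)>k$, where $d_G$ denotes the graph distance. I will exploit this by packing disjoint balls of radius $r=\lfloor k/2\rfloor$ centered at the vertices of $Z$.

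The plan is as follows. First, for each $z\in Z$ define the closed ball $B(z)=\{v\in V:d_G(z,v)\le r\}$ and show these balls are pairwise disjoint: if some $w$ lay in both $B(z_1)$ and $B(z_2)$ for distinct $z_1,z_2\in Z$, the triangle inequality would give $d_G(z_1,z_2)\le 2r\le k$, contradicting the separation property of $Z$. Second, lower-bound $|B(z)|$ using that $G$ is connected: a BFS from $z$ reaches at least one new vertex at each distance level until the whole of $V$ is exhausted, so $|B(z)|\ge \min(r+1,n)$. Third, handle the two cases separately. If $r+1\ge n$, then $B(z)=V$ for every $z\in Z$, so disjointness forces $|Z|\le 1$, and since $k<n<2n$ we already have $|Z|\le 1<2n/k$. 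Otherwise, the disjoint union of the balls lives inside $V$, giving
\[
|Z|\cdot(r+1)\;\le\;\sum_{z\in Z}|B(z)|\;\le\;|V|\;=\;n.
\]

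Finally, I convert $n/(r+1)$ into a bound of the form $2n/k$. Because $r+1=\lfloor k/2\rfloor+1>k/2$ (using the elementary inequality $\lfloor x\rfloor>x-1$), we obtain
\[
|Z|\;\le\;\frac{n}{r+1}\;<\;\frac{n}{k/2}\;=\;\frac{2n}{k},
\]
which is the desired strict inequality. The argument is uniform in whether $k$ is an integer or a real number, since only $\lfloor k/2\rfloor$ enters the packing, and integrality of $d_G$ is used only implicitly via the triangle inequality.

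The only real subtlety is the boundary behavior — making sure the BFS lower bound $|B(z)|\ge r+1$ is available whenever we apply it, which is exactly why I split off the degenerate case $r+1\ge n$. Everything else is a routine triangle-inequality packing argument, and I do not expect any further obstacle.
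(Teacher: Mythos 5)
Your proof is correct and follows essentially the same route as the paper's: both pack pairwise-disjoint balls of radius roughly $k/2$ around the vertices of $Z$, lower-bound each ball's size by $\lfloor k/2\rfloor+1>k/2$ using connectivity, and conclude $|Z|\cdot k/2<n$. Your explicit handling of the degenerate case $r+1\ge n$ is a minor extra care not spelled out in the paper, but the argument is otherwise identical.
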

\begin{proof}
    Let $Z$ be a subset of vertices of maximum size such that $\nexists{(u,u')\in Z},u \leftrightarrow_k u'$. We denote the set $Z=\{z_1,z_2,...\}$ and for each vertex $z_i\in Z$ we associate a subset of vertices $Y_i$ which we define as follows: $\forall{u\in V}, u \leftrightarrow_{k/2} z_i\Leftrightarrow u\in Y_i$.
    Note that if $|Z|\geq 2$, for any distinct pair of vertices $(z_i,z_j)\in Z^2$, we have $Y_i\bigcap Y_j=\emptyset$ since otherwise we would have a path $z_i \leftrightarrow_k z_j$.
    Moreover, we have $\forall{i\in[1,|Z|]},|Y_i|\geq \lfloor k/2 \rfloor +1>k/2$ because the graph $G$ is connected. Let $Y=\bigcup_{i=1}^{|Z|}Y_i$, we have $|Y|\leq n$ and $|Y|=\sum_{i=1}^{|Z|}|Y_i|>|Z|k/2$, so $n>|Z|k/2\implies|Z|<2n/k$.
    So if there is a subset $Z$ such that $\nexists{(u,v)\in Z},u \leftrightarrow_k v$, we have $|Z|<2n/k$.
\end{proof}
\subsection{Lemma \ref{lemma:l6}}
\label{proof:p3}
\begin{lemma*}
Let $\mathcal{G}$ be a temporal graph, let $q\in\mathbb{N}^*$, let $S$ be a subset of vertices such that $|S|\leq n/q$ and let $k=2\sqrt{nq}$.
 If the lifetime $L$ verifies $L\geq 4\frac{n^{2.5}}{\sqrt{q}}$ then for every set $X_k^S$ we have $|X_k^S|< \sqrt{n/q}$.
\end{lemma*}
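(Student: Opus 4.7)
The plan is to proceed by contradiction: assume $x := |X_k^S| \ge \sqrt{n/q}$ and double-count, in two different ways, the total number $N = \sum_{i=1}^{L} |E(H_i)|$, where $H_i$ is the graph on $X_k^S$ whose edges are the pairs $u,v$ with $u \leftrightarrow_k v$ in snapshot $G_i$. The upper bound on $N$ is immediate from the $k$-journey-free property: no pair $u,v \in X_k^S$ admits the journey $u \rightsquigarrow_k v$, so the contrapositive of Lemma~\ref{lemma:l4} bounds the contribution of each pair by at most $kn-1$ snapshots, giving $N < \binom{x}{2}\,kn$.

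For the lower bound I split on the size of $x$. In the \emph{moderate range} $\sqrt{n/q} \le x \le 2\sqrt{n/q}$, applying Lemma~\ref{lemma:l5} to each connected snapshot $G_i$ shows that any subset of $X_k^S$ whose vertices are pairwise disconnected in $G_i$ by $\le k$-edge paths has size $< 2n/k = \sqrt{n/q} \le x$, so $H_i$ contains at least one edge and $N \ge L \ge 4\,n^{2.5}/\sqrt{q}$. A direct computation, using $\binom{x}{2} < x^2/2 \le 2n/q$, shows that $\binom{x}{2}\,kn < (2n/q)(2n\sqrt{nq}) = 4\,n^{2.5}/\sqrt q \le L$, contradicting $N < \binom{x}{2}\,kn$.

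In the \emph{large range} $x > 2\sqrt{n/q}$ I refine the per-snapshot edge count via a ball argument mirroring the proof of Lemma~\ref{lemma:l5}. For each $v \in X_k^S$ let $B_i(v)$ be the ball of radius $k/2$ around $v$ in $G_i$, so $|B_i(v)| > k/2$, and set $c_i(w) = |\{v \in X_k^S : w \in B_i(v)\}|$. Any two vertices of $X_k^S$ sharing a point in their half-balls lie at distance $\le k$ in $G_i$, so
\[
\sum_{w \in V}\binom{c_i(w)}{2} \;=\; \sum_{\{u,v\} \in E(H_i)} |B_i(u) \cap B_i(v)| \;\le\; n\cdot|E(H_i)|,
\]
the crucial point being that non-edges of $H_i$ contribute zero to the intersection sum. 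Since $\sum_w c_i(w) = \sum_v |B_i(v)| > xk/2$, the average $\bar{c} > xk/(2n) > 2$, and Jensen's inequality applied to the convex function $y \mapsto y(y-1)/2$ gives $\sum_w \binom{c_i(w)}{2} \ge n\,\bar{c}^{\,2}/4$, whence $|E(H_i)| > x^2 k^2/(16\,n^2) = x^2 q/(4n)$. Summing and using $L \ge 4\,n^{2.5}/\sqrt q$ yields $N > x^2 n^{1.5}\sqrt{q}$, while the upper bound reads $N < x(x-1)\,n^{1.5}\sqrt{q}$; the implied inequality $x^2 < x(x-1)$ is impossible.

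The main obstacle is this large-range step, where one must recognise that the identity $\sum_w \binom{c_i(w)}{2} = \sum_{\{u,v\}\in E(H_i)} |B_i(u)\cap B_i(v)|$ restricts the sum to edges of $H_i$ (so that non-edges drop out), and then apply Jensen so that the resulting per-snapshot lower bound on $|E(H_i)|$, multiplied by the hypothesis $L \ge 4\,n^{2.5}/\sqrt q$, cancels the $kn = 2n\sqrt{nq}$ factor of the upper bound exactly. The constants are delicate: the choice $k = 2\sqrt{nq}$ and the coefficient $4$ in the bound on $L$ are tuned precisely so that both regimes close with strict inequalities.
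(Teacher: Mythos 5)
Your proof is correct, and it follows the same overall skeleton as the paper's: argue by contradiction, bound each pair's contribution by $kn$ snapshots via the contrapositive of Lemma~\ref{lemma:l4}, and split on whether $|X_k^S|$ lies below or above $2\sqrt{n/q}=4n/k$. Your moderate range is essentially the paper's Case~1 (Lemma~\ref{lemma:l5} forces at least one close pair per snapshot, then count). Where you genuinely diverge is the large range: the paper reuses the maximal path-free set $Z_t$ of Lemma~\ref{lemma:l5} together with its maximality to get, in each snapshot, at least $|X_k^S|-2n/k\geq|X_k^S|/2$ close pairs touching $Z_t$, and then pigeonholes over pairs; you instead re-run the ball construction underlying Lemma~\ref{lemma:l5} and extract a per-snapshot edge count $|E(H_i)|>x^2q/(4n)$ by a second-moment/Jensen argument on the multiplicities $c_i(w)$. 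The paper's covering argument gives a stronger per-snapshot bound ($\Theta(x)$ versus your $x\cdot\frac{xq}{4n}$, which degenerates to $O(1)$ near $x=2\sqrt{n/q}$), which is why your final inequality $x^2<x(x-1)$ closes with no slack at all; it does close, but any loss of a constant factor (e.g.\ a weaker bound on $|B_i(v)|$) would break it, whereas the paper's route has room to spare. Your explicit double-counting of $N=\sum_i|E(H_i)|$ is a cleaner formalization of the pigeonhole step, and the observation that non-edges of $H_i$ contribute nothing to $\sum_w\binom{c_i(w)}{2}$ is the right key fact; both arguments ultimately rest on the same two lemmas.
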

\begin{proof}
    We note that $2n/k=\sqrt{n/q}$, in the following proof we prove that $|X_k^S|< 2n/k$.

     If $|S|<2n/k$, then we have $|X_k^S|<2n/k$ for every set $X_k^S$, so in the rest of the proof we assume that $|S|\geq 2n/k$.
     We prove by contradiction that for every set $X_k^S$ we have $|X_k^S|<2n/k$ if $|S|\geq 2n/k$. We assume that $|S|\geq 2n/k$ and there is a set $X_k^S$ such that $|X_k^S|\geq 2n/k$ in $\mathcal{G}$, we prove that there exists a pair of vertices $(u,v) \in X_k^S$ such that we have $u\rightsquigarrow_k v$ and $v \rightsquigarrow_k u$, which is a contradiction by the definition of $X_k^S$.
     By the assumption that $|X_k^S|\geq 2n/k$, we are in one of these two cases:\\
     \begin{itemize}
     
     \item{Case 1: $2n/k\leq|X_k^S|< 4n/k$.}\\ According to Lemma \ref{lemma:l5}, if $|X_k^S|\geq 2n/k$, then at each time step $t$, there exists a subset of vertices $Z_t\subset X_k^S$ with $|Z_t|<2n/k$ such that $\nexists{(u,v)\in Z_t^2}$ such as $u \leftrightarrow_k v$ and $Z_t$ has a maximal size. We deduce that $\exists v\in X_k^S\backslash Z_t,\exists u\in Z_t$ such as $u \leftrightarrow_k v$ in $G_t$ . There are $\frac{|X_k^S|(|X_k^S|-1)}{2}$ distinct pairs of $X_k^S$ vertices, so according to the pigeon-hole principle, after $\frac{|X_k^S|^2}{2}kn$ time steps there exists a pair of vertices $(u,v) \in X_k^S$ such that there are $kn$ distinct $u \leftrightarrow_k v$ paths in $\mathcal{G}$. Then by Lemma \ref{lemma:l4} we have a pair of vertices $(u,v) \in X_k^S$ such that we have $u\rightsquigarrow_k v$ and $v \rightsquigarrow_k u$ in $\mathcal{G}$. By assumption we have $|X_k^S|<4n/k$, so $\frac{|X_k^S|^2}{2}kn<\frac{8n^2}{k^2}kn=8\frac{n^3}{k}=4\frac{n^{2.5}}{\sqrt{q}}=L$. So, according to the definition of $X_k^S$, there is a contradiction because we have a pair of vertices $(u,v) \in X_k^S$ such that $u\rightsquigarrow_k v$ and $v \rightsquigarrow_k u$ in $\mathcal{G}$.\\

     \item{Case 2: $|X_k^S|\geq 4n/k$.}\\ By Lemma \ref{lemma:l5}, at each time step $t$, there exists a subset of vertices $Z_t\subset X_k^S$ with $|Z_t|<2n/k$ such that $\forall{v\in S\backslash Z_t},\exists{u\in Z_t},u\leftrightarrow_k v$. So at each time step $t$, there exists a subset of vertices of $Y_t = X_k^S\backslash Z_t$ such that $\forall{u\in Y_t}\exists{v\in Z_t}, u \leftrightarrow_k v$, we note that we have $|Y_t|>|X_k^S|-2n/k$. According to the pigeon-hole principle, after $\frac{|X_k^S|}{|X_k^S|-2n/k}|X_k^S|kn$ time steps, there exists a pair of vertices $(u,v)\in X_k^S$ such that there are $kn$ distinct paths $u \leftrightarrow_k v$ in $\mathcal{G}$. Then by Lemma \ref{lemma:l4} we have $u\rightsquigarrow_k v$ and $v \rightsquigarrow_k u$. By assumption, $|X_k^S|\geq 4n/k$ and let $i=\frac{|X_k^S|}{2n/k}\geq 2$, so we have $\frac{|X_k^S|}{|X_k^S|-2n/k}=i/(i-1)$, as $i\geq 2$ we have $\frac{|X_k^S|}{|X_k^S|-2n/k}=i/(i-1)\leq 2$.
     The total number of time steps is : $\frac{|X_k^S|}{|X_k^S|-2n/k}|X_k^S|kn<2|X_k^S|kn\leq 2|S|kn\leq 4\frac{n^{2.5}}{\sqrt{q}}=L$. Therefore we have a contradiction because there is a pair of vertices $(u,v) \in X_k^S$ such that $u\rightsquigarrow_k v$ and $v \rightsquigarrow_k u$ in $\mathcal{G}$.
     \end{itemize}
     We have proven that we have a contradiction if there is a set $X_k^S$ such that $|X_k^S|\geq 2n/k$ in $\mathcal{G}$. So $|X_k^S|<2n/k=\sqrt{n/q}$ in $\mathcal{G}$ if $L\geq 4\frac{n^{2.5}}{\sqrt{q}}$.
\end{proof}
\subsection{Lemma \ref{lemma:l7}}
\label{proof:p4}
\begin{lemma*}
    Let $\mathcal{G}$ be a temporal graph, let $q\in\mathbb{N}^*$ and $S$ a subset of vertices such that $|S|\leq n/q$.
    If $L\geq 5\frac{n^3}{q}$, then there exists a journey in $\mathcal{G}$ visiting $|S|\sqrt{q/n}$ vertices of $S$ by traversing at most $2n$ edges.
\end{lemma*}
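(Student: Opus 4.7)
Setup and edge budget. I would set $k=2\sqrt{nq}$ (so Lemma \ref{lemma:l6} applies verbatim) and target $p=\lceil |S|\sqrt{q/n}\rceil$ vertices of $S$ to visit, each reached by one ``leg'' of at most $k$ edges. The scaling is chosen precisely so that the total edge count works: $pk\le 2|S|q$, and since $|S|\le n/q$ this is $\le 2n$, matching the bound in the statement. So the whole task reduces to producing $p$ concatenated legs, each of length $\le k$, ending at distinct vertices of $S$.

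Time partition and applicability of Lemma \ref{lemma:l6}. I would split $[1,L]$ into $p$ consecutive windows $W_1,\dots,W_p$ of equal length $L/p$ and reserve $W_i$ for the $i$-th new visit. The crucial check is that each window is long enough to apply Lemma \ref{lemma:l6} inside it:
\[
\frac{L}{p}\;\ge\;\frac{5n^3/q}{|S|\sqrt{q/n}}\;=\;\frac{5n^{3.5}}{|S|\,q^{1.5}}\;\ge\;\frac{5n^{2.5}}{\sqrt{q}}\;\ge\;\frac{4n^{2.5}}{\sqrt{q}},
\]
using $|S|\le n/q$. So in each $\mathcal{G}_{[W_i]}$ Lemma \ref{lemma:l6} yields $|X_k^{S_i}|<\sqrt{n/q}$ for the current unvisited subset $S_i\subseteq S$.

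Inductive construction of the legs. Let $V_{i-1}$ be the set of $S$-vertices already visited at the start of $W_i$ and put $S_i=S\setminus V_{i-1}$. Apply Lemma \ref{lemma:l6} inside $W_i$ to $S_i$ to obtain a $k$-journey-free set $X_k^{S_i}$ of size $<\sqrt{n/q}$. By the maximality remark the authors extract from Lemma \ref{lemma:l6}, every vertex of $V$ not in $X_k^{S_i}$ is connected by $\le k$-edge journeys (in both directions) to some hub in $X_k^{S_i}$ inside $W_i$; in particular this is true for the agent's current position $u_{i-1}$ and for every fresh $v\in S_i\setminus X_k^{S_i}$. Concatenating $u_{i-1}\rightsquigarrow_k h$ with $h\rightsquigarrow_k v$ for a suitably chosen hub $h\in X_k^{S_i}$ extends the journey to a previously unvisited vertex of $S$. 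Iterating over $i=1,\dots,p$ produces a journey visiting $p=|S|\sqrt{q/n}$ vertices of $S$, and the $p\cdot(L/p)=L$ budget in time is respected by construction.

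Main obstacle. The delicate step is that the naive concatenation ``to a hub, then to a fresh vertex'' uses $2k$ edges per visited vertex, which is twice the target budget. I see two ways to close the factor of $2$. Option (i): apply Lemma \ref{lemma:l6} to the enriched set $S_i\cup\{u_{i-1}\}$ (still of size $O(n/q)$, with the minor blow-up absorbable into the constant $5$ versus $4$) and use the strengthened maximality to extract a \emph{single} direct journey $u_{i-1}\rightsquigarrow_k v$ with $v\in S_i$. Option (ii): amortise the ingress into the hub across consecutive windows by keeping the agent \emph{at} the hub between visits, so that the per-window cost is only one $\le k$-edge journey from hub to a new $S$-vertex. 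Establishing whichever of these refinements one prefers is the real technical content; once each leg is certified to cost $\le k$ edges, the final accounting $pk\le 2|S|q\le 2n$ edges and $L$ time steps is automatic, giving the lemma.
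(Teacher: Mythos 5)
You have the right skeleton — $k=2\sqrt{nq}$, one time window of length $\ge 4n^{2.5}/\sqrt{q}$ per new vertex of $S$, Lemma \ref{lemma:l6} applied to the not-yet-visited part of $S$ in each window, and the accounting $pk\le 2|S|q\le 2n$ — and your window-length check is correct. But the step you flag as the ``main obstacle'' is precisely the content of the lemma, and neither of your two proposed fixes closes it. Option (i) fails: applying Lemma \ref{lemma:l6} to $S_i\cup\{u_{i-1}\}$ only guarantees that vertices \emph{outside} the maximal $k$-journey-free set connect to some vertex \emph{inside} it; if the current position $u_{i-1}$ itself lands in $X_k^{S_i\cup\{u_{i-1}\}}$ (which can happen), then by definition it has no $\le k$-edge journey to any other member of that set, and no journey to any particular vertex of $S_i$ is guaranteed at all. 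Option (ii) names the right idea (pay only one $\le k$-edge leg per window) but does not say how the agent ends window $i$ at a vertex from which window $i+1$ offers a single $\le k$-edge leg to a fresh vertex — the hub sets are recomputed per window, so ``staying at the hub'' is not an available move.

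The paper's resolution (proof \ref{proof:p4}) is to make the visited vertices themselves the hubs: it defines $S_i=X_k^{S\setminus(S_1\cup\dots\cup S_{i-1})}$ computed in the $i$-th window, so that every $v\in S_{i+1}$ admits some $u_i(v)\in S_i$ with $u_i(v)\rightsquigarrow_k v$ inside window $i$, and then selects the legs \emph{backwards}: pick $u_l\in S_l$, then $u_{l-1}=u_{l-1}(u_l)\in S_{l-1}$, then $u_{l-2}=u_{l-2}(u_{l-1})$, and so on, so that the endpoint of each leg is by construction the starting point of the next. This yields $u_1\rightsquigarrow_k u_2\rightsquigarrow_k\cdots\rightsquigarrow_k u_l$ with exactly one $\le k$-edge leg per new vertex. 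Note also that Lemma \ref{lemma:l7} does not fix the starting vertex of the journey (that is handled later in Lemma \ref{lemma:l8} by $(n-1)$-edge connecting journeys), so the difficulty you created by insisting on starting each window at the agent's arbitrary current position is partly self-inflicted; the journey is allowed to begin at a vertex of the construction's choosing, namely $u_1\in S_1$. As written, your argument establishes the lemma with $4n$ edges rather than $2n$ (which would still propagate to an $O(n^{1.5})$ bound overall), but the $2n$ bound claimed in the statement is not proved.
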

\begin{proof}
In this proof, we will use the notation 
$k=2\sqrt{nq}$. First we present the following property :
let $t_1$ and $t_2$ be two timesteps that verify $t_1+4\frac{n^{2.5}}{\sqrt{q}}\leq t_2\leq L$ and $\mathcal{G}'=\mathcal{G}_{[t_1,t_2]}$, for every set $X_k^S$ in $\mathcal{G}'$, we have $\forall{v\in S\backslash X_k^S},\exists{u\in X_k^S} \mbox{ such that } u\rightsquigarrow_k v \mbox{ and } v\rightsquigarrow_k u$. In the rest of the proof, when we talk of a set $X_k^S$, it refers to the set $X_k^S$ of minimum size. This property is represented in figure \ref{fig:imgsupp}. 
 \\To define the algorithm that constructs the journey in $\mathcal{G}$ that visits $ |S|\sqrt{q/n}$ vertices of $S$, we first partition the set $S$ into $l=\lfloor |S|\sqrt{q/n} \rfloor+1$ subsets denoted $S_i,1\leq i\leq l$ described below. We note $C=4\lfloor\frac{n^{2.5}}{\sqrt{q}}\rfloor$.
    $$
    S_i = \left\{
        \begin{array}{ll}
            X_k^{S\backslash S'} \mbox{with } S'=\bigcup_{j=1}^{i-1}S_j \mbox{ in } \mathcal{G}_{[(i-1)C,iC]}& \mbox{ if } 1\leq i<l \\
            S\backslash \bigcup_{j=1}^{i-1}S_j & \mbox{if } i=l
        \end{array}
    \right.$$

We prove that $\forall_{i<l}, 0<|S_i|<\sqrt{n/q}$. We recall that $l=\lfloor |S|\sqrt{q/n}\rfloor +1$. By the definition of $S_i$ above, if $i<l$ then $S_i = X_k^{S\backslash S'}$ with $S'=\bigcup_{j=1}^{i-1}S_j$. And we know from Lemma \ref{lemma:l6} that for any set of vertices $A,\mbox{ such that }|A|\leq n/q$ in a temporal graph of lifetime greater than $C=4\lceil\frac{n^{2. 5}}{\sqrt{q}}\rceil$ we have, $|X_k^A|<\sqrt{n/q}$ so $|S_i|<\sqrt{n/q}$ in $\mathcal{G}_{[(i-1)C,iC]}$. We then show that $|S_i|>0$, we have $|S'|= \sum_{j=1}^{i-1}|S_j|$, and we have seen that $\forall{j<l}, |S_j|<\sqrt{n/q}$ so $|S'|< \sum_{j=1}^{i-1}\sqrt{n/q} = (i-1)\sqrt{n/q} <|S|\sqrt{q/n}\sqrt{n/q}= |S|$, i.e. $|S|>|S'|$. So $|S\backslash S'|=|S|-|S'|>0$, so we have $|S\backslash S'|>0$, i.e. $|S_i| = |X_k^{S\backslash S'}|>0$ because $|X_k^{S\backslash S'}|>0$ if $S\backslash S'$ is non-empty.

In the case where $i=l$, we prove that $|S_i|>0$. We have seen that $|\bigcup_{j=1}^{l-1}S_j|<(l-1)\sqrt{n/q}<l\sqrt{n/q}=|S|\sqrt{q/n}\sqrt{n/q}=|S|$ i.e. $|\bigcup_{j=1}^{l-1}S_j|<|S|$. Also $|S_l|=|S\backslash \bigcup_{j=1}^{l-1}|S_j|=|S|-|\bigcup_{j=1}^{l-1}|S_j|>0$.\\
So $\forall{i\in[1,l]},0<|S_i|<\sqrt{n/q}$.

Note that, by definition, $\forall{i\in[1,l-1]} \mbox{ and } S''=S\backslash \bigcup_{j=1}^{i-1}S_j$, and the graph $\mathcal{G}_{[(i-1)C,iC]}$, we have $S_i=X_k^{S''} \mbox{ and } \forall{v\in S_{i+1}}, v\in S''\backslash S_i$. So in $\mathcal{G}_{[(i-1)C,iC]}$, we have $\forall{v\in S_{i+1}}, \exists{u\in S_i}$, such that we have the journeys $ u\rightsquigarrow_k v \mbox{ and } v\rightsquigarrow_k u \mbox{ in } \mathcal{G}_{[(i-1)C,iC]}$. 
In the rest of the proof, $\forall{i\in [1,l-1]}$, we denote $u_i(v), v\in S_{i+1}$ the vertex $u_i(v)\in S_i$ such that we have $u_i(v)\rightsquigarrow_k v \mbox{ and } v\rightsquigarrow_k u_i(v) \mbox{ in } \mathcal{G}_{[(i-1)C,iC]}$ (we can also denote this vertex $u_i$ when the context is clear enough).\\
We construct a set of journeys $M$ as follows. First we pose $M=\bigcup_{i=1}^{l-1} M_i$ such that the sets $M_i$ form a partition of $M$. We have $\forall{i\in[1,l-1]}$, $$M_i=\{u_i(v)\rightsquigarrow_k v,\forall{v\in S_{i+1}}\}$$ An example of the subsets of $M_i,1\leq i<l$ are shown in figure \ref{fig:img3}.\\
We then prove that we can concatenate $\lfloor|S|\sqrt{q/n} \rfloor$ journeys of $M$ to obtain a journey in $\mathcal{G}$ visiting $\lfloor |S|\sqrt{q/n}\rfloor +1$ vertices of $S$ by selecting exactly one journey in each subset $M_i,1\leq i<l$.\\
We know that $|S_l|>0$, let $u_l\in S_l$, and we have seen that in $\mathcal{G}_{[(l-1)C,lC]}$, we have a vertex $u_{l-1}(u_l)\in S_{l-1}$ such that we have the journey belonging to the set $M_{l-1}$ verifying $u_{l-1}(u_l)\rightsquigarrow_k u_l$. We note $u_{l-1}$ the vertex $u_{l-1}(u_l)$, and we select this journey to be part of the final journey. In the same way, in $\mathcal{G}_{[(l-2)C,(l-1)C]}$ we select the journey belonging to $M_{l-2}$ verifying $u_{l-2}(u_{l-1})\rightsquigarrow_k u_{l-1}$. We iterate this operation until we select the journey $u_1(u_2)\rightsquigarrow_k u_2$ belonging to $M_1$ in $\mathcal{G}_{[1,C]}$.
\\ We then concatenate each of the $l-1 = \lfloor|S|\sqrt{q/n} \rfloor$ selected journeys, giving us a journey $u_1\rightsquigarrow_k u_2 \rightsquigarrow_k \mbox{...}u_{l-1}\rightsquigarrow_k u_{l}$. This journey visits $\lfloor|S|\sqrt{q/n} \rfloor+1>|S|\sqrt{q/n}$ vertices from $S$ with $\lfloor|S|\sqrt{q/n} \rfloor*k\leq \frac{n}{q}\sqrt{q/n}2\sqrt{qn}=2n$ edges traversed and in $\lfloor|S|\sqrt{q/n} \rfloor*C\leq 5\frac{n^3}{q}=L$ time steps.
\end{proof}
\subsection{Lemma \ref{lemma:l8}}
\label{proof:p5}
\begin{lemma*}
     Let $\mathcal{G}$ be a temporal graph with a starting vertex $s$, let $q\in\mathbb{N}^*$ with $q<n/4$ and $S$ a subset of vertices such that $|S|= n/q$.
     If the lifetime of $\mathcal{G}$ is $L\geq 6\frac{n^{3.5}}{q^{1.5}}$ then there exists a journey, starting at $s$, visiting $|S|/2$ vertices of $S$ and traversing at most $4\frac{n^{1.5}}{\sqrt{q}}$ edges.  
\end{lemma*}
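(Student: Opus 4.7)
The plan is to build the required journey by concatenating a sequence of blocks, each block consisting of a short routing journey of at most $n-1$ edges supplied by Lemma \ref{lemma:l1}, followed by a visiting journey of at most $2n$ edges supplied by Lemma \ref{lemma:l7}. I would maintain $S_i \subseteq S$, the set of vertices of $S$ still unvisited after $i$ blocks, with $S_0 = S$, and stop as soon as at least $|S|/2$ vertices of $S$ have been visited.

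First I would control the per-iteration progress. As long as the stopping criterion is not met, $|S_i| \geq |S|/2 = n/(2q)$; moreover $|S_i|$ is monotonically non-increasing and starts at $n/q$, so the hypothesis $|S_i| \leq n/q$ of Lemma \ref{lemma:l7} is preserved throughout. Applying Lemma \ref{lemma:l7} to $S_i$ on a fresh time window of length $5n^3/q$ yields a journey visiting at least $|S_i|\sqrt{q/n} \geq \sqrt{n/q}/2$ new vertices of $S$ at a cost of at most $2n$ edges. The assumption $q < n/4$ guarantees $\sqrt{n/q}/2 \geq 1$, so each block makes genuine progress. In particular, the recurrence $|S_{i+1}| \leq |S_i|\bigl(1-\sqrt{q/n}\bigr)$ shows that after $k = \lceil \ln 2 \cdot \sqrt{n/q}\,\rceil = O(\sqrt{n/q})$ blocks one has $|S_k| \leq |S|/2$, which is the stopping condition.

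Next I would glue the blocks in time. Partition $[1,L]$ into $k$ consecutive windows, each of length $5n^3/q + (n-1) \leq 6 n^3/q$; this is feasible because $L \geq 6 n^{3.5}/q^{1.5}$ and because $q < n/4 < n^2$ gives $n-1 \leq n^3/q$. Inside each window the agent first invokes Lemma \ref{lemma:l1} to travel from its current position to the starting vertex of the upcoming Lemma \ref{lemma:l7} journey (costing at most $n-1$ edges and $n-1$ time steps), and then executes that Lemma \ref{lemma:l7} journey in the remaining $5n^3/q$ time steps.

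Finally I would add up the costs. There are $k = O(\sqrt{n/q})$ blocks, each traversing at most $3n-1$ edges and using at most $6n^3/q$ time steps, so the totals are at most $4 n^{1.5}/\sqrt{q}$ edges and at most $6 n^{3.5}/q^{1.5}$ time steps, the assumption $q < n/4$ being used one last time to absorb the additive constant coming from the ceiling in $k$. The principal technical nuisance I expect is precisely this bookkeeping, namely keeping the constants $4$ and $6$ tight when converting between $\ln 2 \cdot \sqrt{n/q}$ and $\lceil \ln 2 \cdot \sqrt{n/q}\,\rceil$; this is resolved by the geometric (rather than arithmetic) analysis of $|S_i|$ above, which keeps the iteration count well below $\sqrt{n/q}$.
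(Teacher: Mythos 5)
Your construction is essentially the paper's own proof: both iterate Lemma \ref{lemma:l7} on the shrinking set of unvisited vertices inside disjoint time windows and splice the resulting pieces together with Lemma \ref{lemma:l1} journeys, and your geometric accounting of $|S_i|$ even spares you the paper's final top-up phase of single-vertex journeys. The one wrinkle is the time budget: with $k \le \ln 2\,\sqrt{n/q}+1$ blocks, rounding each window up to $6n^3/q$ before multiplying already gives roughly $(6\ln 2+3)\,n^{3.5}/q^{1.5} > 6\,n^{3.5}/q^{1.5}$, so you must keep the exact window length $5n^3/q+n-1$ (which, using $\sqrt{q/n}<1/2$, lands just under the $6\,n^{3.5}/q^{1.5}$ allowed by the hypothesis) rather than the rounded one.
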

\begin{proof}
    Given $p=\sqrt{n/q}$ and $k=2\sqrt{nq}$, we first show by induction that $\forall{j\leq p}$, we can construct $j$ successive journeys such that the journey $i\in[1,j]$ visits $p-i+1$ vertices of $S$ that has not already been visited (during the $i-1$ previous journeys) by traversing at most $2n$ edges in $n^3/q$ time steps. \\
    For the initialization, we take $j=1$. Recall that we have $k=2\sqrt{qn}<n $ because $q<n/4$, so by Lemma \ref{lemma:l7} there exists a journey visiting $|S|\sqrt{q/n}=\sqrt{\frac{n}{q}}=p$ vertices of $S$ by traversing at most $ 2n$ edges in $5\frac{n^3}{q}$ time steps. So for $j=1$, we have a journey that visits $p=p-j+1$ vertices of $S$ in $2n$ edges and $n^3/q$ time steps.\\
    The induction hypothesis is that there exists an integer $j\in[1,p[$ such that there exists $j$ successive journeys such that the journey $i\in [1,j]$ visits $p-i+1$ vertices of $S$ in $2n$ edges and $5n^3/q$ time steps. We prove that $j+1$ similar successive journeys can be constructed.\\
    By the induction hypothesis, we know that there are $j$ successive journeys that visit a total of $\sum_{i=1}^j p-i+1< jp$ vertices of $S$, so if $S'$ is the set of vertices of $S$ not already visited, we have $|S'|>|S|-jp$. By Lemma \ref{lemma:l7}, in $S'$ with $|S'|\leq |S|\leq n/q$, there exists a journey visiting $|S'|\sqrt{q/n}$ vertices of $S'$ by traversing at most $ 2n$ edges in $5\frac{n^3}{q}$ time steps.
    Now we have $|S'|\sqrt{q/n}>(|S|-jp)\sqrt{q/n}=(n/q-j*\sqrt{n/q})\sqrt{q/n}=\sqrt{\frac{n}{q}}-j=p-j$ vertices of $S'$, and the number of edges traversed is $2n$ and the number of time steps is $n^3/q$ so we have proven the induction.
    So, $\forall{j\leq p}$, there exist $j$ successive journeys such that the journey $i,1\leq i\leq j$ visits $p-i+1$ vertices of $S$ in $2n$ edges and $5\frac{n^3}{q}$ time steps.\\

    Recall that, by Lemma \ref{lemma:l1}, for any pair of vertices $(a,b)$ in $\mathcal{G}$, there exists a journey from $a$ to $b$ in $n-1$ time steps (and traversing at most $n-1$ edges).
    We can therefore create $p=\sqrt{n/q}$ successive journeys as described above (each traversing at most $2n$ edges), which we call journeys of type $1$, and connect them by pairs with journeys of $n-1$ time steps and $n-1$ edges which we call the journeys of type $2$. The total number of vertices of $S$ visited by the journeys of type $1$ is $p+(p-1)+(p-2)+..+1 = p(p-1)/2=p^2/2 - p/2= \frac{n}{2q}-\frac{\sqrt{n}}{2\sqrt{q}}$. And the number of edges traversed by the journeys of type $1$ and $2$ is at most $p(2n + n-1)<3pn = 3\frac{n^{1.5}}{\sqrt{q}}$. In the same way, we compute the total number of time steps of the $p$ successive journeys of type $1$ and $2$: $p(5n^3/q + n-1) < n^{1.5}/\sqrt{q} + 5n^{3.5}/q^{1.5}$.

    After traversing the $p$ successive journeys of type $1$, we have visited at least $\frac{n}{2q}-\frac{\sqrt{n}}{2\sqrt{q}}$ vertices of $S$, we then visit $\frac{\sqrt{n}}{2\sqrt{q}}$ vertices of $S$ not already visited by making as many successive journeys of type $2$ (each journey visiting exactly one of the $\frac{\sqrt{n}}{2\sqrt{q}}$ vertices of $S$ not already visited). Since a journey of type $2$ traverses at most $n-1$ edges in $n-1$ time steps, if we concatenate all the journeys of type $1$ and $2$, in total we have a final journey visiting $n/2q=|S|/2$ vertices of $S$ traversing at most $3\frac{n^{1. 5}}{\sqrt{q}} + \frac{\sqrt{n}}{2\sqrt{q}}(n-1)<4\frac{n^{1.5}}{\sqrt{q}}$ edges and in $n^{1.5}/\sqrt{q} + 5n^{3.5}/q^{1. 5} + \frac{\sqrt{n}}{2\sqrt{q}}(n-1)<\frac{3n^{1.5}}{2\sqrt{q}} + 5\frac{n^{3.5} }{q^{1.5}}<6\frac{n^{3.5} }{q^{1.5}}=L$ time steps.
\end{proof}
\end{document}